\theoremstyle{plain}
\newtheorem{theorem}{Theorem}[section]
\newtheorem{lemma}[theorem]{Lemma}
\theoremstyle{definition}
\newtheorem{definition}{Definition}[section]
\newtheorem{example}{Example}[section]
\theoremstyle{remark}
\newtheorem{remark}{Remark}[section]
\DeclareMathOperator{\diam}{diam}
\DeclareMathOperator{\Ker}{Ker}
\DeclareMathOperator{\diag}{diag}
\newcommand{\Lin}{Lin}
\newcommand{\Img}{\mathrm{Im}}
\newcommand{\strongperp}{\mathrel{\perp\!\!\!\!\perp}^S}
\newcommand{\strongp}{\mathrel{\perp}^S}
\numberwithin{equation}{section} 
\begin{document}
\title[Diameter of the $BJ$-orthograph in finite dimensional $C^*$-algebras]{Diameter of the $BJ$-orthograph in finite dimensional $C^*$-algebras} 

\author{Srdjan Stefanovi\' c}
\address{University of Belgrade\\ Faculty of Mathematics\\ Student\/ski trg 16-18\\ 11000 Beograd\\ Serbia}

\email{srdjan.stefanovic@matf.bg.ac.rs}

\thanks{The research was supported by the Serbian Ministry of Education, Science and Technological Development through Faculty of Mathematics, University of Belgrade.}

\begin{abstract} We determine the exact diameter of the orthograph related to mutual strong Birkhoff-James orthogonality in arbitrary finite dimensional $C^*$-algebra. Additionally, we will estimate the distance between vertices in an arbitrary $C^{*}$-algebra that can be represented as a direct sum.
\end{abstract}


\subjclass[2010]{Primary: 46L05, 15B10, Secondary: 05C12, 46B20, 46L08, 46L35 }

\keywords{mutual strong Birkhoff-James orthogonality, finite dimensional $C^*$-algebra, orthograph, diameter.}

\maketitle

\section{Introduction and preliminaries}

Orthogonality problems have been studied for many years. In general, normed and Banach spaces are not equipped with an inner product. Therefore, the question arises how to define the relation of orthogonality in spaces where we do not have the inner product but only a norm. There are different ways, such as Robert's orthogonality or Pythagoras orthogonality (see for example \cite{ArambasicRoberts2019} and \cite{Magajna2022}). Probably, the most common type of orthogonality is Birkhoff-James orthogonality, defined by Birkhoff in \cite{Birkhoff} and developed by James (see \cite{James3},\cite{James2},\cite{James1}).

\begin{definition} Let $X$ be a normed space, and let $x$, $y\in X$. We say that $x$ is Birkhoff-James orthogonal to $y$, and denote $x\perp_{BJ} y$, or simply $x\perp y$, if for any $\lambda\in \mathbb C$ there holds
\begin{equation}\nonumber\label{BJord}
\|x+\lambda y\|\ge\|x\|.
\end{equation}

\end{definition}

Birkhoff-James orthogonality has some deficiencies, for instance, it is neither symmetric nor additive. Birkhoff-James orthogonality will be abbreviated to $BJ$-orthogonality hereinafter.

For further details on $BJ$-orthogonality, the reader is referred to a recent survey \cite{ArambasicSurvey} and references therein.

Besides usual $BJ$-orthogonality, there is also a notion of strong $BJ$-orthogonality where the scalar is replaced with an element from the Hilbert $C^*$-module. 

\begin{definition} Let $X$ be a right Hilbert $C^*$-module over a $C^{*}$-algebra $A$ and let $a$, $b\in X$.

a) We say that $a$ is strong $BJ$-orthogonal to $b$, and denote $a\strongp b$ if for any $c\in A$ there holds
\begin{equation}\nonumber
\|a+bc\|\ge\|a\|.
\end{equation}

b) We say that $a$ and $b$ are mutually strong $BJ$-orthogonal to each other, and denote $a\strongperp b$ if $a\strongp b$ and $b\strongp a$.
\end{definition}

\begin{remark}
    The part a) of the previous definition is from \cite{ArambasicAFA2014}, and part b) from \cite{ArambasicBJMA2020}.
\end{remark}

In particular, in a $C^{*}$-algebra A, regarded as a right Hilbert $C^{*}$-module over itself, $a\in A$ is strong Birkhoff–James orthogonal to $b\in A$ if for any $c\in A$ there holds
\begin{equation}\nonumber
\|a+bc\|\ge\|a\|.
\end{equation}
It is easy to show that 
$$a^{*}b=0\Rightarrow a\strongperp b\Rightarrow a\perp_{BJ} b.$$

Recently, in \cite{ArambasicBJMA2020} the orthograph related to $BJ$-orthogonality has been introduced. 

\begin{definition} Let $A$ be a $C^*$-algebra, and let $S$ be the corresponding projective space, i.e.\ $S=(A\setminus\{0\})/a\sim \lambda a$, where $\lambda$ runs through $\mathbb C$.

The orthograph $\Gamma(A)$ related to the mutual strong $BJ$-orthogonality is the graph with $S$ as a set of vertices and with edges consisting of those pairs $(a,b)\in S\times S$ for which $a\strongperp b$.
\end{definition}

Then in \cite{Keckic2023JMAA} it is shown what isolated vertices in $\Gamma(A)$ are, which $C^{*}$-algebras have only one connected component and that diameter of arbitrary $C^{*}$-algebras is at most 4. At the end of the article, the problem of determining the exact diameter for finite dimensional $C^{*}$-algebras is posed. In this paper, we give a complete answer to this problem and we solve some related problems connected with distance between non-isolated vertices in ortograph in arbitrary $C^{*}$-algebra. 

First, in standard books dealing with $C^{*}$-algebras, like \cite{Murphy} and \cite{Pedersen}, the structure of finite dimensional $C^{*}$-algebras is given.
\begin{theorem}
    Let A be a finite dimensional $C^{*}$-algebra. Then A is isomorphic to a direct sum of matrix algebras, i.e., there exist positive integers $n_1, n_2,\dots, n_k$ such that
    $$A\cong M_{n_1}(\mathbb{C})\oplus M_{n_2}(\mathbb{C})\oplus\dots M_{n_k}(\mathbb{C}).$$
\end{theorem}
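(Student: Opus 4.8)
The plan is the classical two-step route, combining the structure theory of semisimple rings with the extra rigidity coming from the involution and the $C^*$-identity. First I would split $A$ into a direct sum of \emph{simple} finite-dimensional $C^*$-algebras using its center, and then identify each simple summand with a full matrix algebra. Throughout I use that a finite-dimensional $C^*$-algebra is automatically unital (e.g.\ because it is reflexive, so it equals its bidual, which is a unital von Neumann algebra).

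\textbf{Step 1 (reduction to the simple case).} The center $Z(A)$ is a commutative finite-dimensional $C^*$-algebra, hence, by the Gelfand transform, isomorphic to $\mathbb{C}^{k}$ for some $k$; equivalently, $Z(A)$ is the linear span of mutually orthogonal minimal projections $z_1,\dots,z_k$ with $z_1+\dots+z_k=1$. Consequently $A=z_1A\oplus\dots\oplus z_kA$ as a direct sum of $C^*$-algebras, and $Z(z_iA)=\mathbb{C}z_i$. It therefore suffices to prove that a finite-dimensional $C^*$-algebra $B$ with $Z(B)=\mathbb{C}1$ is simple. If $I\subseteq B$ were a nonzero proper closed two-sided ideal, then $I$ is self-adjoint, hence a finite-dimensional $C^*$-subalgebra, hence unital; denoting its unit by $e$, for every $b\in B$ one has $eb,be\in I$, so $ebe=eb$ and $ebe=be$, whence $eb=be$, i.e.\ $e\in Z(B)=\mathbb{C}1$. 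A projection in $\mathbb{C}1$ is $0$ or $1$, which is impossible since $0\ne I\ne B$.

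\textbf{Step 2 (a simple finite-dimensional $C^*$-algebra is a matrix algebra).} Let $B$ be such an algebra. Choosing a nonzero projection $p\in B$ with $\dim(pBp)$ as small as possible, one checks that $pBp=\mathbb{C}p$: otherwise $pBp$, being a finite-dimensional unital $C^*$-algebra of dimension $>1$, would contain a proper nonzero subprojection $q\le p$ with $\dim(qBq)<\dim(pBp)$. Now set $L:=Bp$, a closed, finite-dimensional left ideal. For $x,y\in L$ we have $x^*y\in pBp=\mathbb{C}p$, so the rule $x^*y=\langle y,x\rangle\,p$ defines an inner product turning $L$ into a Hilbert space, and left multiplication gives a $*$-representation $\pi\colon B\to\mathcal{B}(L)\cong M_n(\mathbb{C})$, $n=\dim L$, with $\pi(b)^*=\pi(b^*)$. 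Since $\pi(p)p=p\ne 0$, the kernel of $\pi$ is a proper closed two-sided ideal, hence $0$ by Step 1, so $\pi$ is an isometric embedding. Moreover any nonzero left ideal contained in $L=Bp$ already contains a nonzero scalar multiple of $p$ (if $0\ne x\in L$ then $x^*x=tp$ with $t>0$, and $x^*x$ lies in that ideal), hence equals $L$; thus $\pi$ acts irreducibly on $L$, and a $C^*$-subalgebra of $\mathcal{B}(L)$ acting irreducibly on a finite-dimensional space is all of $\mathcal{B}(L)$ (double commutant / Burnside theorem). Hence $\pi(B)=\mathcal{B}(L)\cong M_n(\mathbb{C})$, and applying this to each $z_iA$ gives $A\cong M_{n_1}(\mathbb{C})\oplus\dots\oplus M_{n_k}(\mathbb{C})$.

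\textbf{On the main difficulty.} The computational content is negligible; the work is in the two structural facts, namely the availability of projections in ideals and hereditary subalgebras of finite-dimensional $C^*$-algebras (existence of a minimal projection, centrality of the unit of an ideal), and the step from irreducibility to surjectivity of $\pi$, which is the matrix-algebra incarnation of the double commutant theorem. A hands-on alternative to the representation in Step 2 is to construct matrix units directly: produce mutually orthogonal, Murray--von Neumann equivalent minimal projections $p_1,\dots,p_n$ with $\sum_i p_i=1$ and partial isometries $v_i$ satisfying $v_i^*v_i=p_1$, $v_iv_i^*=p_i$, put $e_{ij}:=v_iv_j^*$, and check $e_{ij}e_{kl}=\delta_{jk}e_{il}$, $e_{ij}^*=e_{ji}$, $\sum_i e_{ii}=1$, which exhibits an isomorphism $B\cong M_n(\mathbb{C})$.
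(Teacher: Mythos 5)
Your proof is correct: the decomposition over the center into simple summands, followed by the identification of each simple summand with a full matrix algebra via a minimal projection $p$, the left ideal $Bp$ with inner product $x^*y=\langle y,x\rangle p$, and irreducibility plus the double commutant/Burnside argument (or, alternatively, matrix units), is the standard textbook argument. Note that the paper does not prove this statement at all: it quotes it as the well-known structure theorem for finite dimensional $C^{*}$-algebras, citing the books of Murphy and Pedersen, where essentially the argument you give is carried out.
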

Recall that for unital $C^{*}$-algebras $A$ and $B$, $A\oplus B$ is $C^{*}$-algebra equipped with a norm 
$$\|a\oplus b\|=\max\{\|a\|,\|b\|\}.$$
We will use a shorter notation $(a,b)$ instead of $a\oplus b$. Recall that from Lemma 5.3.  from \cite{Keckic2023JMAA}, we know that $(a_1,a_2,\dots,a_k)$ is non-isolated vertex in unital $C^{*}$-algebra iff there is some $a_i$ that is non-invertible.

The following theorem is crucial in examining whether two matrices are mutually strong orthogonal or not (see \cite{ArambasicAFA2014}, Proposition 2.8.).
\begin{theorem}\label{karakterizacija}
    Let $a,b\in M_n(\mathbb{C})$. Then $a\strongp b$ if there exists a unit vector $x\in\mathbb{C}^n$ such that 
    $$\|ax\|=\|a\|\ \text{and}\ b^{*}ax=0.$$
\end{theorem}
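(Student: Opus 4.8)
The plan is to prove the stated implication and, since the result is really a characterisation, its converse as well: $a\strongp b$ holds if and only if there is a unit vector $x\in\mathbb{C}^n$ with $\|ax\|=\|a\|$ and $b^{*}ax=0$. The implication written in the theorem is the easy one, and the converse carries the content. For the easy direction I would simply note that if such an $x$ exists then, for every $c\in M_n(\mathbb{C})$, $\skp{ax}{bcx}=\skp{b^{*}ax}{cx}=0$, so the Pythagorean identity gives $\|(a+bc)x\|^{2}=\|ax\|^{2}+\|bcx\|^{2}\ge\|a\|^{2}$, whence $\|a+bc\|\ge\|(a+bc)x\|\ge\|a\|$ and $a\strongp b$.

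For the converse I would argue by contraposition: assuming that every unit vector $x$ with $\|ax\|=\|a\|$ satisfies $b^{*}ax\neq 0$, I would manufacture a $c$ with $\|a+bc\|<\|a\|$. Let $P$ be the orthogonal projection of $\mathbb{C}^n$ onto the range $\Img b$. Every column of $Pa$ lies in $\Img b$, so the matrix equation $bc_{0}=Pa$ admits a solution $c_{0}$; I would fix $t\in(0,1)$ and set $c=-tc_{0}$, so that $a+bc=(I-tP)a$. Splitting $ay=Pay+(I-P)ay$ orthogonally, a short computation gives, for every unit vector $y$,
$$\|(I-tP)ay\|^{2}=\|ay\|^{2}-(2t-t^{2})\,\|Pay\|^{2}\ \le\ \|ay\|^{2}\ \le\ \|a\|^{2},$$
since $2t-t^{2}>0$. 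As the unit sphere of $\mathbb{C}^n$ is compact, $\|(I-tP)a\|=\|(I-tP)ay_{0}\|$ for some unit vector $y_{0}$. If $\|ay_{0}\|<\|a\|$ the first inequality is already strict; if $\|ay_{0}\|=\|a\|$ then $b^{*}ay_{0}\neq 0$ by hypothesis, equivalently $Pay_{0}\neq 0$ because $\Ker b^{*}=(\Img b)^{\perp}$, and again the first inequality is strict. In either case $\|a+bc\|<\|a\|$, contradicting $a\strongp b$.

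I expect the converse to be the only genuine difficulty. The perturbation has to be chosen so as to strictly lower $\|a\,\cdot\,\|$ on the ``critical'' unit vectors — those where $a$ attains its norm, namely the unit sphere of the top eigenspace of $a^{*}a$ — while never pushing the values above $\|a\|$ elsewhere; the choice $c=-tc_{0}$ with small $t$ does exactly this, and passing from a pointwise strict inequality to a strict inequality of operator norms is precisely where compactness of the sphere is used. An alternative would be to invoke the general state-theoretic characterisation of strong $BJ$-orthogonality in Hilbert $C^{*}$-modules and specialise to the vector (pure) states of $M_n(\mathbb{C})$, but the elementary argument above is self-contained and seems preferable here.
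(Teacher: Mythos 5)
Your argument is correct, but it is worth noting that the paper itself gives no proof of this statement: it is quoted from \cite{ArambasicAFA2014} (Proposition~2.8), where it is established in the general setting of $B(H)$, respectively Hilbert $C^*$-modules, essentially by the state-theoretic route you mention at the end. What you supply instead is a self-contained, purely finite-dimensional proof: the sufficiency direction via the Pythagorean identity $\|(a+bc)x\|^{2}=\|ax\|^{2}+\|bcx\|^{2}$ (using $\skp{ax}{bcx}=\skp{b^{*}ax}{cx}=0$), and the necessity direction via the explicit perturbation $c=-tc_{0}$ with $bc_{0}=Pa$, $P$ the projection onto $\Img b$, where the identity $\|(I-tP)ay\|^{2}=\|ay\|^{2}-(2t-t^{2})\|Pay\|^{2}$ together with compactness of the unit sphere and $\Ker b^{*}=(\Img b)^{\perp}$ yields $\|a+bc\|<\|a\|$ whenever $b^{*}ax\neq0$ on every norm-attaining unit vector $x$. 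Both steps check out (the degenerate case $a=0$ is harmless, since then the hypothesis of your contraposition cannot hold). Two further remarks in your favour: although the theorem as stated in the paper reads only as an ``if'', the paper later uses the converse as well (in the $\mathbb{C}\oplus M_2(\mathbb{C})$ example it extracts the unit vector $y$ from $d\strongp b$), so proving the full equivalence, as you do, is exactly what is needed; and your elementary argument buys independence from the module/state machinery at the cost of being restricted to the finite-dimensional (more generally, norm-attaining) situation, whereas the cited proposition covers $B(H)$ in full.
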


In paper \cite{ArambasicBJMA2020}, among other things, the diameter of the orthograph for the $C^{*}$-algebra $B(H)$, where $H$ is the Hilbert space, is determined. We will use Theorem 2.9. from \cite{ArambasicBJMA2020} in the matrix framework.

\begin{theorem}\label{B(H)}
    Let $\mathcal{S}$ be the set of all nonzero non-invertible matrices in $M_n(\mathbb{C})$ (up to the scalar multiplication). Then:
    \begin{itemize}
        \item[(1)] $a$ is an isolated vertex of $\Gamma(M_n(\mathbb{C}))$ if and only if $a$ is invertible.
        \item[(2)] If $n=1$, $S=\emptyset$.
        \item[(3)] If $n=2$, then $S$ in disconnected. The connected components of the orthograph $\Gamma(M_2(\mathbb{C}))$ are either isolated vertices or the sets of the form
        $$\mathcal{S}_x=\{a\in M_n(\mathbb{C})\ |\ \Img\, a=\Lin \{x\}\ \text{or}\ \Img\, a=\Lin\{x\}^{\perp}\}.$$
        \item[(4)] If $n=3$, then $S$ is a connected component whose diameter is 4. 
        \item[(5)] If $n\geqslant4$, then $S$ is a connected component whose diameter is 3. 
    \end{itemize}
\end{theorem}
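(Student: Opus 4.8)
The plan is to recast Theorem~\ref{karakterizacija} as a statement about subspaces and then argue by dimension counting. For a nonzero $a\in M_n(\mathbb{C})$, rescale so that $\|a\|=1$ and let $E_a$ be the eigenspace of $a^{*}a$ for the eigenvalue $1$; equivalently $E_a$ is the span of the unit vectors $x$ with $\|ax\|=\|a\|$, and $a|_{E_a}$ is an isometry onto $F_a:=a(E_a)\subseteq\Img a$, so $\dim F_a=\dim E_a$. Then Theorem~\ref{karakterizacija} says $a\strongp b$ iff $F_a\cap(\Img b)^{\perp}\neq\{0\}$, and hence $a\strongperp b$ iff $F_a\cap(\Img b)^{\perp}\neq\{0\}$ and $F_b\cap(\Img a)^{\perp}\neq\{0\}$. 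Two consequences carry most of the argument. First, if $a$ is invertible then $(\Img a)^{\perp}=\{0\}$, so $b\strongp a$ fails for every $b\neq0$ and $a$ is isolated; this is the easy half of (1), and (2) follows at once because in $M_1(\mathbb{C})$ every nonzero element is invertible. Second, if $\Img a\perp y$ for a unit vector $y$ then $a\strongperp yw^{*}$ for every unit $w$ (since $F_{yw^{*}}=\Lin\{y\}$ meets $(\Img a)^{\perp}$, while $F_a\subseteq\Img a\subseteq y^{\perp}=(\Img(yw^{*}))^{\perp}$); in particular every nonzero non-invertible matrix is adjacent to a rank-one matrix, proving the other half of (1). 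A short direct computation also gives that rank-one matrices $xv^{*}$ and $yw^{*}$ are adjacent iff $x\perp y$, so adjacency of rank-one matrices depends only on their image lines.

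Now (3) follows for $n=2$: there every nonzero non-invertible matrix has rank one, a matrix with image $\Lin\{x\}$ is adjacent exactly to those with image $\Lin\{x\}^{\perp}$ and conversely, and no line in $\mathbb{C}^{2}$ is orthogonal to two distinct lines, so the components are the invertible (isolated) vertices together with the sets $\mathcal S_x$. For $n\geq3$ the subspace $x^{\perp}\cap y^{\perp}$ is nonzero for any two lines $\Lin\{x\},\Lin\{y\}$, so any two rank-one matrices are at distance at most $2$; with the previous paragraph this shows $\mathcal S$ is a single connected component, which is the connectedness claim in (4) and (5).

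For the diameters (the cases $n\geq3$) I would first observe that for arbitrary nonzero non-invertible $a,c$ one can pick unit vectors $y\perp\Img a$, $z\perp\Img c$, and then $d(a,c)\leq 1+d(yw^{*},zw'^{*})+1\leq4$, which gives the upper bound in (4); moreover if $\dim(\Img a)^{\perp}\geq2$ or $\dim(\Img c)^{\perp}\geq2$, or if $y$ and $z$ can be chosen orthogonal, one gets $d(a,c)\leq3$. The only remaining situation is $\mathrm{rank}\,a=\mathrm{rank}\,c=n-1$ with $(\Img a)^{\perp}=\Lin\{y\}$, $(\Img c)^{\perp}=\Lin\{z\}$ and $y\not\perp z$, and here I would invoke $n\geq4$. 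If $\dim E_a\geq2$ or $\dim E_c\geq2$, choose unit $p\in F_a$ and $q\in F_c$ with $p\perp q$ and route $a-v_1-v_2-c$ through the rank-$(n-1)$ partial isometries $v_1,v_2$ with $\Img v_1=p^{\perp}$ and $\Img v_2=q^{\perp}$; one checks directly that $v_1\strongperp a$, $v_2\strongperp c$ and $v_1\strongperp v_2$. If instead $\dim E_a=\dim E_c=1$, write $F_a=\Lin\{g\}$ and $F_c=\Lin\{h\}$ (so $g\perp y$, $h\perp z$), pick a unit $r\in\{y,g,z\}^{\perp}$ and then a unit $s\in\{z,h,r\}^{\perp}$, and route through the rank-two partial isometries with $\Img v_1=\Lin\{y,r\}$ and $\Img v_2=\Lin\{z,s\}$; the relations $g\perp r$, $\langle r,z\rangle=\langle r,s\rangle=0$ and $\langle y,z\rangle\neq0$ then yield $v_1\strongperp a$, $v_2\strongperp c$ and $v_1\strongperp v_2$. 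The subspaces $\{y,g,z\}^{\perp}$ and $\{z,h,r\}^{\perp}$ are nonzero exactly because $n\geq4$, which is where that hypothesis enters, and this establishes the upper bound $3$ in (5).

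For the matching lower bounds I would produce extremal pairs. When $n\geq4$, take $a,c$ of rank $n-1$ with $\dim E_a=\dim E_c=1$, $(\Img a)^{\perp}=\Lin\{y\}$, $(\Img c)^{\perp}=\Lin\{z\}$, $F_a=\Lin\{g\}$, $F_c=\Lin\{h\}$, chosen so that $y\not\perp z$, $g\not\perp z$ and $g\not\parallel z$; then $a$ and $c$ are not adjacent (as $\Lin\{g\}\cap\Lin\{z\}=\{0\}$), and a common neighbour $v$ would force $y,z\in F_v\subseteq\Img v$ and $g\perp\Img v$, hence $z\perp g$, a contradiction; so $d(a,c)=3$. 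When $n=3$, take the analogous $a,c$ with $y,z,g,h$ in sufficiently general position (explicit unit vectors can be written down); again there is no edge and no common neighbour, and for a hypothetical path $a-v_1-v_2-c$ one observes that in $\mathbb{C}^{3}$ adjacency to $a$ forces $\Img v_1\perp g$ and $y\in F_{v_1}$, so $\Img v_1$ is either $\Lin\{y\}$ or the plane $g^{\perp}$, and likewise for $v_2$; running through the four resulting cases and using the general-position relations shows $v_1\strongperp v_2$ is impossible, so $d(a,c)=4$. The step I expect to be the real obstacle is the $n\geq4$ upper bound in the sub-case $\dim E_a=\dim E_c=1$: there the rank-one shortcut provably does not exist, one is forced to build a higher-rank intermediate, and the construction is tuned precisely so that enough mutually orthogonal directions remain available --- which is exactly the content of $n\geq4$. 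The $n=3$ lower bound is the dual of this phenomenon: only a finite case check, but one that must be carried out carefully, since in dimension $3$ the constructions fail by the narrowest possible margin.
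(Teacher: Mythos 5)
This statement is not proved in the paper at all: it is imported verbatim as Theorem~2.9 of \cite{ArambasicBJMA2020}, so there is no in-paper argument to compare against. What you have written is a self-contained reconstruction, and as far as I can check it is correct. Your key move --- restating Theorem~\ref{karakterizacija} as ``$a\strongp b$ iff $F_a\cap(\Img b)^{\perp}\neq\{0\}$,'' where $F_a=a(E_a)$ and $E_a$ is the top eigenspace of $a^{*}a$ --- is sound, and the adjacency criterion for rank-one matrices ($xv^{*}\strongperp yw^{*}$ iff $x\perp y$), the two halves of (1), the component description in (3), the distance-$\le2$ bound between rank ones for $n\ge3$, and the partial-isometry routings for the $n\ge4$ upper bound all check out (for a partial isometry $v$ one indeed has $F_v=\Img v$, which is what makes those verifications one-liners). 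The extremal pairs for the lower bounds also work: for $n\ge4$ the condition $g\not\perp z$ kills any common neighbour, and for $n=3$ the four-case analysis on $\Img v_1\in\{\Lin\{y\},g^{\perp}\}$, $\Img v_2\in\{\Lin\{z\},h^{\perp}\}$ does close under the general-position assumptions $y\not\perp z$, $y\not\parallel h$, $z\not\parallel g$, $g\not\perp h$ (you should state these explicitly rather than say ``sufficiently general position,'' and also add $g\not\parallel h$ so that $g^{\perp}\cap h^{\perp}$ is a line in the no-common-neighbour step). Two remarks. First, your whole argument uses Theorem~\ref{karakterizacija} as an equivalence, whereas the paper states only the ``if'' direction; the converse is true in $M_n(\mathbb{C})$ (and the paper itself silently uses it later), but you should say you are invoking it. Second, be aware that you have re-proved a quoted black box; that is more than the paper asks of you, but it does make the finite-dimensional results of the paper genuinely self-contained.
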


In paper \cite{Keckic2023JMAA}, the following lemma is proved.
\begin{lemma}\label{ortogonalni po koordinatama}
    If $a_1\strongperp a_2$ and $b_1\strongperp b_2$ then $(a_1,b_1)\strongperp(a_2,b_2)$. In particular
        $$(a,\textbf{0})\strongperp(\textbf{0},b)$$
    whenever $a$, $b\neq\textbf{0}$. 
\end{lemma}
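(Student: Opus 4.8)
The plan is to read off both halves of $(a_1,b_1)\strongperp(a_2,b_2)$ directly from the definition of strong $BJ$-orthogonality, exploiting two structural facts about a direct sum: the norm on $A\oplus B$ is the maximum of the coordinate norms, and the module action of $A\oplus B$ on itself (viewing it as a right Hilbert $C^{*}$-module over itself) is coordinatewise, i.e.\ $(a_2,b_2)(c,d)=(a_2c,\,b_2d)$.

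First I would prove $(a_1,b_1)\strongp(a_2,b_2)$. Fix an arbitrary multiplier $(c,d)\in A\oplus B$. Then
$$(a_1,b_1)+(a_2,b_2)(c,d)=(a_1+a_2c,\ b_1+b_2d),$$
so its norm is $\max\{\|a_1+a_2c\|,\ \|b_1+b_2d\|\}$. From $a_1\strongp a_2$ we have $\|a_1+a_2c\|\ge\|a_1\|$, and from $b_1\strongp b_2$ we have $\|b_1+b_2d\|\ge\|b_1\|$; taking the maximum of these two inequalities gives
$$\|(a_1,b_1)+(a_2,b_2)(c,d)\|\ \ge\ \max\{\|a_1\|,\|b_1\|\}\ =\ \|(a_1,b_1)\|.$$
Since $(c,d)$ was arbitrary, $(a_1,b_1)\strongp(a_2,b_2)$. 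Repeating the identical computation with the subscripts $1$ and $2$ interchanged, now using $a_2\strongp a_1$ and $b_2\strongp b_1$, yields $(a_2,b_2)\strongp(a_1,b_1)$, and combining the two gives $(a_1,b_1)\strongperp(a_2,b_2)$.

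For the ``in particular'' clause I would note that $x\strongp\mathbf{0}$ holds for \emph{every} $x$ (the inequality $\|x+\mathbf{0}\cdot c\|\ge\|x\|$ is an equality) and $\mathbf{0}\strongp y$ holds for every $y$ (here $\|\mathbf{0}+yc\|\ge 0=\|\mathbf{0}\|$), hence $a\strongperp\mathbf{0}$ and $\mathbf{0}\strongperp b$ trivially. Applying the first part with $(a_1,b_1)=(a,\mathbf{0})$ and $(a_2,b_2)=(\mathbf{0},b)$ then gives $(a,\mathbf{0})\strongperp(\mathbf{0},b)$; the hypotheses $a\neq\mathbf{0}$, $b\neq\mathbf{0}$ serve only to guarantee that these elements are genuine vertices of $\Gamma(A\oplus B)$.

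There is essentially no obstacle here — the argument is a two-line estimate. The only point that needs care is bookkeeping about which notion of $\strongp$ is in force: on $A\oplus B$ the multipliers range over all of $A\oplus B$ and act one coordinate at a time, which is precisely what lets the two coordinatewise inequalities be combined via $\max$; if instead one tried to use genuinely ``entangled'' multipliers the statement would still hold, but the same coordinatewise bounds apply to those as well, so nothing changes.
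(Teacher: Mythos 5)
Your proof is correct and complete: the coordinatewise module action together with the max-norm gives $\|(a_1,b_1)+(a_2,b_2)(c,d)\|=\max\{\|a_1+a_2c\|,\|b_1+b_2d\|\}\ge\max\{\|a_1\|,\|b_1\|\}$ immediately, and the observation that $x\strongp\mathbf{0}$ and $\mathbf{0}\strongp y$ hold trivially disposes of the ``in particular'' clause, with $a,b\neq\mathbf{0}$ needed only so that the elements are genuine vertices. The paper itself does not reprove this lemma (it is quoted from the cited reference \cite{Keckic2023JMAA}), and your argument is exactly the standard direct verification one would give there.
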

Of course, it simply translates to the case with a finite number of coordinates.

The last lemma is not valid in reverse direction, and that is the main problem in direct sum $C^{*}$-algebra. 
\begin{example}
    Namely, notice that if $I$ is an identity matrix and $P$ is rank one projection in $M_2(\mathbb{C})$, then $(I,P)\strongperp(P,I)$, but $I\not\strongperp P$, since $I\strongp P$ and $P\not\strongp I$. 
\end{example}

\section{Distance between vertices in direct sum $C^{*}$-algebras}

We suppose that all $A_1,\dots,A_k$ are unital $C^{*}$-algebras. Some statements can easily be expressed in non-unital case, but the goal of this paper are finite-dimensional algebras that are necessarily unital. In this section we consider the direct sum $C^{*}$-algebra $A=A_1\oplus A_2\oplus\dots\oplus A_k$, where $k\geqslant2$.

\begin{theorem}\label{neinvertibilni na razlicitim koordinatama}
     If there exist two not right invertible elements $a_i\neq\textbf{0}$, $b_j\neq\textbf{0}$ and $(i\neq j)$, then distance between $(a_1,a_2,\dots,a_k)\ \text{and}\ (b_1,b_2,\dots,b_k)\in A$ is at most 3. 
\end{theorem}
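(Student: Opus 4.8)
The plan is to connect the two vertices $(a_1,\dots,a_k)$ and $(b_1,\dots,b_k)$ through two intermediate vertices, using the "one coordinate at a time" idea together with Lemma~\ref{ortogonalni po koordinatama}. Since $a_i$ is not right invertible, by Theorem~\ref{karakterizacija} (applied in $A_i$, or rather the characterization behind strong orthogonality in a general unital $C^*$-algebra) there should be a nonzero element $c_i\in A_i$ with $a_i\strongp c_i$; in the finite-dimensional / matrix case this is exactly a nonzero $c_i$ with $c_i^*a_ix=0$ for a maximizing unit vector $x$, and one can even arrange $a_i\strongperp c_i$ by picking $c_i$ appropriately (e.g.\ an element whose range sits inside $(\,a_i x)^\perp$ and which is itself not right invertible). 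Similarly pick $d_j\neq\mathbf 0$ in $A_j$ with $b_j\strongperp d_j$. The first intermediate vertex will be the element supported only on coordinate $j$ and equal to $d_j$ there; the second will be supported only on coordinate $i$ and equal to $c_i$ there.

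Concretely, first I would set $u=(0,\dots,0,d_j,0,\dots,0)$ (with $d_j$ in slot $j$) and $v=(0,\dots,0,c_i,0,\dots,0)$ (with $c_i$ in slot $i$). Then I claim the path $(a_1,\dots,a_k) - v - u - (b_1,\dots,b_k)$ works. The middle edge $v\strongperp u$ is immediate from Lemma~\ref{ortogonalni po koordinatama} (the special case $(a,\mathbf 0)\strongperp(\mathbf 0,b)$, here placed in coordinates $i$ and $j$). For the edge $(a_1,\dots,a_k)\strongperp v$: since $v$ is zero in every coordinate except $i$, checking strong orthogonality of the direct sum reduces to coordinate $i$, where we need $a_i\strongperp c_i$ — which holds by construction — together with the trivial fact that every element is mutually strongly orthogonal to $\mathbf 0$ in the remaining coordinates, so Lemma~\ref{ortogonalni po koordinatama} assembles these into $(a_1,\dots,a_k)\strongperp v$. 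Symmetrically, $(b_1,\dots,b_k)\strongperp u$ because $u$ is supported on coordinate $j$ and $b_j\strongperp d_j$. This gives a walk of length $3$, hence distance at most $3$.

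The main obstacle is the very first step: producing, from the non-right-invertibility of $a_i$, a nonzero $c_i$ for which \emph{mutual} strong orthogonality $a_i\strongperp c_i$ holds rather than just one-sided $a_i\strongp c_i$. One-sided orthogonality is cheap — non-right-invertibility of $a_i$ means $a_i$ does not attain its norm injectively, or in the matrix case $\|a_ix\|=\|a_i\|$ for some unit $x$ and then any $c_i$ with $c_i^*a_ix=0$ works — but to get the reverse one also needs $c_i\strongp a_i$. In $M_{n_i}(\mathbb C)$ one can take $c_i$ to be a suitable rank-one (or low-rank) element whose range avoids $\operatorname{Lin}\{a_ix\}$ and simultaneously arrange, via Theorem~\ref{karakterizacija} again, a maximizing vector $y$ for $c_i$ with $a_i^*c_iy=0$; the genuine care is needed when $a_i$ has rank close to full, so that the orthogonal complements one is forced to work in are small. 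If the ambient $A_i$ is a general unital $C^*$-algebra one would instead invoke whatever structural lemma from \cite{ArambasicAFA2014} or \cite{Keckic2023JMAA} guarantees a strong-orthogonality mate for a non-(right-)invertible element; modulo that input, the rest of the argument is the routine coordinatewise bookkeeping sketched above.
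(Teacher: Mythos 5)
Your proposal is correct and follows essentially the same route as the paper: pick nonzero mutually strongly orthogonal mates $a_i'\strongperp a_i$ in $A_i$ and $b_j'\strongperp b_j$ in $A_j$, place them in single-coordinate-supported elements, and assemble the length-3 path via Lemma~\ref{ortogonalni po koordinatama}. The only step you flagged as a potential obstacle — producing a \emph{mutually} strongly orthogonal mate from non-right-invertibility in a general unital $C^*$-algebra — is exactly what the paper imports as a black box (Proposition 2.4 of \cite{Keckic2023JMAA}), so no extra matrix-level construction is needed.
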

\begin{proof}
    Since $a_i$ and $b_j$ are not right invertible, there are $a_i'\neq\textbf{0}$ and $b_j'\neq\textbf{0}$ such that $a_i\strongperp a_i'$ (in $A_i$) and $a_j\strongperp a_j'$ (in $A_j$) (see \cite{Keckic2023JMAA}, Propostion 2.4.). Then, it is true that
    $$(a_1,\dots,a_i,\dots,a_k)\strongperp(\textbf{0},\dots,a_i',\dots,\textbf{0})\strongperp(\textbf{0},\dots,b_j',\dots,\textbf{0})\strongperp(b_1,b_2,\dots,b_k),$$
    by Lemma \ref{ortogonalni po koordinatama} because all the elements are mutually strong orthogonal in terms of coordinates. 
\end{proof}

Since we will be interested in when the diameter of the orthograph will be greater than 3, we can assume that there are no right non-invertible in different places. Therefore, in the following theorems, we will observe the cases in which right non-invertibles are at the same position, and it will be important whether the norm is reached on them or not.

\begin{theorem}\label{najvec1i neinvertibilan}
    Let $(a_1,a_2,\dots,a_k),(b_1,b_2,\dots,b_k)\in A$ be mutual strong $BJ$-ortho\-go\-nal and $a_k$ is the only not right invertible element  $(\text{in}\ (a_1,a_2,\dots,a_k))$. Suppose that $\|a_k\|>\|a_i\|$ for all $1\leqslant i\leqslant k-1$. Then $a_k\strongperp b_k\ (in\ A_k)$  and $\|b_k\|\geqslant\|b_i\|$ for all $1\leqslant i\leqslant k-1$ (especially, $b_k\neq0)$.
\end{theorem}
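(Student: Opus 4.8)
The plan is to unpack the mutual strong orthogonality $(a_1,\dots,a_k)\strongperp(b_1,\dots,b_k)$ via the norm characterization and exploit the strict dominance $\|a_k\|>\|a_i\|$ to localize everything to the $k$-th coordinate. First I would use $(b_1,\dots,b_k)\strongp(a_1,\dots,a_k)$: for every $(c_1,\dots,c_k)\in A$ we have
\[
\max_i\|b_i+a_ic_i\| \ge \max_i\|b_i\|.
\]
Fix $i<k$ and choose $c_i$ so as to shrink $\|b_i+a_ic_i\|$ as much as possible (in a matrix algebra one can take $c_i$ supported on a vector realizing $\|a_i\|$, or more simply invoke that $a_i$ is right invertible so $b_i+a_ic_i$ can be made $0$ — but in the general unital setting right invertibility of $a_i$ for all $i<k$ is exactly the hypothesis, so $b_i+a_ic_i=0$ is achievable). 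Then the inequality forces the maximum on the right to be attained at the $k$-th coordinate, i.e. $\|b_k\|\ge\|b_i\|$ for all $i<k$. This is the easy half and also yields $b_k\neq 0$ once we know the vertices are non-isolated (if $b_k=0$, then all $b_i=0$, contradicting $(b_1,\dots,b_k)\in S$).

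Next I would establish $a_k\strongp b_k$ in $A_k$. Starting again from $(b_1,\dots,b_k)\strongp(a_1,\dots,a_k)$, take an arbitrary $c_k\in A_k$, set $c_i=0$ for $i<k$ so that the $i$-th coordinates contribute exactly $\|b_i\|\le\|b_k\|$, and obtain $\max\{\|b_k+a_kc_k\|,\ \max_{i<k}\|b_i\|\}\ge\|b_k\|$. This only gives the trivial bound unless $\|b_k+a_kc_k\|$ dominates, so instead I would use the other direction $(a_1,\dots,a_k)\strongp(b_1,\dots,b_k)$: for every $(c_1,\dots,c_k)$,
\[
\max_i\|a_i+b_ic_i\|\ge\|a_k\|,
\]
using $\|(a_1,\dots,a_k)\|=\|a_k\|$ by the strict dominance. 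For $i<k$, choosing $c_i=0$ gives contribution $\|a_i\|<\|a_k\|$, so these coordinates are irrelevant; hence $\|a_k+b_kc_k\|\ge\|a_k\|$ for all $c_k\in A_k$, which is exactly $a_k\strongp b_k$ in $A_k$.

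For the reverse relation $b_k\strongp a_k$ in $A_k$ I would argue symmetrically but more carefully, since the strict dominance is only assumed for the $a$-side. Take $(c_1,\dots,c_k)$ with $c_i$ ($i<k$) chosen to kill $b_i+a_ic_i$ (possible by right invertibility of $a_i$), leaving $\max\{\|b_k+a_kc_k\|,0\}=\|b_k+a_kc_k\|$ on coordinates, which by $(b_1,\dots,b_k)\strongp(a_1,\dots,a_k)$ is $\ge\max_i\|b_i\|=\|b_k\|$; since $c_k$ was arbitrary this gives $b_k\strongp a_k$. Combining the two relations yields $a_k\strongperp b_k$ in $A_k$. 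The main obstacle I anticipate is the step extracting $b_k\strongp a_k$: one must be sure that shrinking the lower coordinates $b_i+a_ic_i$ to zero is legitimate, i.e. that right invertibility of each $a_i$ ($i<k$) is both available (it is, by hypothesis that $a_k$ is the \emph{only} non-right-invertible entry) and actually lets $b_i+a_ic_i=0$ (take $c_i=-a_i^{-1}b_i$ where $a_i^{-1}$ is a right inverse). With that in hand all inequalities decouple coordinatewise and the conclusion follows.
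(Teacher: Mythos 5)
Your proposal is correct and follows essentially the same route as the paper: use $(a_1,\dots,a_k)\strongp(b_1,\dots,b_k)$ with $c_i=0$ for $i<k$ and the strict dominance $\|a_k\|>\|a_i\|$ to get $a_k\strongp b_k$, and use $(b_1,\dots,b_k)\strongp(a_1,\dots,a_k)$ with $c_i=-a_i^{-1}b_i$ (right inverses) for $i<k$ to get both $b_k\strongp a_k$ and, taking $c_k=0$, the inequality $\|b_k\|\geqslant\|b_i\|$. The only difference is cosmetic (you spell out why $b_k\neq 0$, which the paper leaves implicit), so no further comparison is needed.
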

\begin{proof}
    We know that 
    $$\max\{(\|a_1+b_1c_1\|,\|a_2+b_2c_2\|,\dots,\|a_k+b_kc_k\|)\}\geqslant\|a_k\|,$$
    for all $(c_1,c_2,\dots,c_k)\in A$, so if we put $c_1=c_2=\dots=c_{k-1}=\textbf{0}$, we get $\|a_k+b_kc_k\|\geqslant\|a_k\|$ for all $c_k\in A_k$. So, $a_k\strongp b_k$.

    Next, we know that 
    $$\max\{\|b_1+a_1c_1\|,\|b_2+a_2c_2\|,\dots,\|b_k+a_kc_k\|\}\geqslant\max\{\|b_1\|,\|b_2\|,\dots,\|b_k\|\}.$$
    If we put $c_i=-a_i^{-1}b_i$ ($a_i^{-1}$ is right inverse of $a_i$) for all $1\leqslant i\leqslant k-1$, we get 
    $$\|b_k+a_kc_k\|\geqslant\max\{\|b_1\|,\|b_2\|,\dots,\|b_k\|\}\geqslant\|b_k\|,$$
    so $b_k\strongp a_k$ and if we put $c_k=0$, we get $\|b_k\|\geqslant\|b_i\|$ for all $1\leqslant i\leqslant k-1$.
\end{proof}

\begin{lemma}\label{neinvertibilni najveci dovoljno za ortogonalnost}
    Let $(a_1,a_2,\dots,a_k),(b_1,b_2,\dots,b_k)\in A$. Suppose that $\|a_k\|\geqslant\|a_i\|$, $\|b_k\|\geqslant\|b_i\|$ for all $1\leqslant i\leqslant k-1$ and $a_k\strongperp b_k$. Then 
    $$(a_1,a_2,\dots,a_k)\strongperp(b_1,b_2,\dots,b_k).$$
    \begin{proof}
        For all $c_k\in A_k$ it is true that $\|a_k+b_kc_k\|\geqslant\|a_k\|$ and $\|b_k+a_kc_k\|\geqslant\|b_k\|$. So 
        \begin{equation}
            \begin{split}\nonumber
                &\max\{\|a_1+b_1c_1\|,\|a_2+b_2c_2\|,\dots,\|a_k+b_kc_k\|\}\\
                &\geqslant\|a_k+b_kc_k\|\geqslant\|a_k\|=\max\{\|a_1\|,\|a_2\|,\dots,\|a_k\|\},
            \end{split}
        \end{equation}
        so $(a_1,a_2,\dots,a_k)\strongp(b_1,b_2,\dots,b_k)$. The second direction is proved analogously.
    \end{proof}
\end{lemma}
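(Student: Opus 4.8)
The plan is to unwind the definition of $\strongperp$ into its two one-sided components and verify each by a direct norm estimate; the two verifications are mirror images of each other under interchanging the $a_i$'s and $b_i$'s, so it suffices to carry one out carefully.

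For the relation $(a_1,\dots,a_k)\strongp(b_1,\dots,b_k)$, I would first note that the direct-sum norm together with the hypothesis $\|a_k\|\geqslant\|a_i\|$ gives $\|(a_1,\dots,a_k)\|=\|a_k\|$. Then for an arbitrary $(c_1,\dots,c_k)\in A$ I would estimate
\[
\|(a_1,\dots,a_k)+(b_1,\dots,b_k)(c_1,\dots,c_k)\|=\max_{1\leqslant i\leqslant k}\|a_i+b_ic_i\|\geqslant\|a_k+b_kc_k\|\geqslant\|a_k\|,
\]
where the last inequality is precisely $a_k\strongp b_k$ in $A_k$, a consequence of the assumption $a_k\strongperp b_k$. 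Since $(c_1,\dots,c_k)$ was arbitrary, this shows $(a_1,\dots,a_k)\strongp(b_1,\dots,b_k)$.

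For the reverse relation I would run the identical argument with the roles of $a$ and $b$ swapped, this time using $\|b_k\|\geqslant\|b_i\|$ to identify $\|(b_1,\dots,b_k)\|=\|b_k\|$ and using $b_k\strongp a_k$ for the key inequality. Taking the two one-sided relations together yields $(a_1,\dots,a_k)\strongperp(b_1,\dots,b_k)$.

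There is essentially no obstacle here; the only point to keep track of is that the coordinate $k$ at which each vector attains its norm is the same coordinate at which the strong orthogonality hypothesis is imposed, which is exactly what the two domination hypotheses on the norms guarantee. The statement may be viewed as a partial converse to Lemma \ref{ortogonalni po koordinatama}, valid under the extra restriction that a single common coordinate dominates both vectors.
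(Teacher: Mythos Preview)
Your proof is correct and follows exactly the same line as the paper's: identify the norm of the tuple as $\|a_k\|$ via the domination hypothesis, bound the $k$-th coordinate of the perturbed tuple below using $a_k\strongp b_k$, and then note that the other direction is symmetric. There is nothing to add.
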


\begin{theorem}\label{neinvertibilan nenajveci}
    Let $(a_1,a_2,\dots,a_k),(b_1,b_2,\dots,b_k)\in A$ such that $a_k$ and $b_k$ are only not right invertible and there exist $a_i(i\neq k)$ and $b_j(j\neq k)$ such that $\|a_i\|\geqslant\|a_k\|$ and $\|b_j\|\geqslant\|b_k\|$. Then the distance between $(a_1,a_2,\dots,a_k)$ and $(b_1,b_2,\dots,b_k)$ is at most 2. 
\end{theorem}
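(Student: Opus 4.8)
The plan is to produce a single common neighbour of the two vertices. Put $c:=(\mathbf{0},\dots,\mathbf{0},1_{A_k})$, the element of $A$ equal to $0$ in the first $k-1$ coordinates and to the unit of $A_k$ in the last one. I will check that $(a_1,\dots,a_k)\strongperp c$ and $c\strongperp(b_1,\dots,b_k)$; this exhibits a path of length two in $\Gamma(A)$, hence distance at most $2$. (Note that $c$ is an honest vertex distinct from both endpoints, since $1_{A_k}$ is invertible whereas $a_k$ and $b_k$ are not.) Since the hypotheses on $(a_1,\dots,a_k)$ and $(b_1,\dots,b_k)$ are completely symmetric, it suffices to prove $(a_1,\dots,a_k)\strongperp c$.

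One inclusion, $(a_1,\dots,a_k)\strongp c$, is essentially automatic: for every $d=(d_1,\dots,d_k)\in A$ we have $cd=(\mathbf{0},\dots,\mathbf{0},d_k)$, so
$$\|(a_1,\dots,a_k)+cd\|=\max\{\|a_1\|,\dots,\|a_{k-1}\|,\|a_k+d_k\|\}\ge\max\{\|a_1\|,\dots,\|a_{k-1}\|\},$$
and the right-hand side equals $\|(a_1,\dots,a_k)\|$ precisely because some $\|a_i\|$ with $i\le k-1$ already dominates $\|a_k\|$ by hypothesis. So the norm does not decrease, whatever $d$ is.

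For the other inclusion, $c\strongp(a_1,\dots,a_k)$, the only point that requires an argument is the elementary fact that in a unital $C^*$-algebra $B$, if $a\in B$ is not right invertible then $\|1_B+ad\|\ge 1$ for all $d\in B$: otherwise $1_B-(1_B+ad)=-ad$ would be invertible by the Neumann series, forcing $ad$, and hence $a$, to be right invertible. Applying this in $B=A_k$ to the not right invertible element $a_k$, for any $d=(d_1,\dots,d_k)\in A$ one gets
$$\|c+(a_1,\dots,a_k)d\|=\max\{\|a_1d_1\|,\dots,\|a_{k-1}d_{k-1}\|,\|1_{A_k}+a_kd_k\|\}\ge\|1_{A_k}+a_kd_k\|\ge 1=\|c\|,$$
so $c\strongp(a_1,\dots,a_k)$, and together with the previous paragraph this gives $(a_1,\dots,a_k)\strongperp c$. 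Running the same two steps with $(b_1,\dots,b_k)$ and its dominating coordinate $b_j$ yields $c\strongperp(b_1,\dots,b_k)$, which completes the proof. The heart of the matter is just the observation that the non-right-invertible coordinate $a_k$ (resp.\ $b_k$) does not carry the norm of the whole tuple, so the tuple is forced to be mutually strongly orthogonal to $(\mathbf{0},\dots,\mathbf{0},1_{A_k})$; beyond the little norm estimate above I do not anticipate any real obstacle.
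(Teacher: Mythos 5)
Your proof is correct and follows the same route as the paper's: both take $(\mathbf{0},\dots,\mathbf{0},\mathbf{1})$ as the common neighbour and verify the same two mutual orthogonalities, with the same norm estimates for the directions that use the dominating coordinates $a_i$ and $b_j$. The one genuine difference is how you justify the key sub-step $\mathbf{1}\strongp a_k$ for a not right invertible $a_k$: the paper produces a pure state $\rho$ with $\rho(a_ka_k^{*})=0$ and appeals to a characterization of strong $BJ$-orthogonality from the cited literature, whereas you argue directly that $\|\mathbf{1}+a_kd\|<1$ would make $-a_kd=\mathbf{1}-(\mathbf{1}+a_kd)$ invertible by the Neumann series, forcing $a_k$ to be right invertible. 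Your version of this step is more elementary and self-contained (no state-space machinery), at the cost of being specific to the unit; the paper's pure-state argument is the one that generalizes to showing $c\strongp a$ for other norm-attaining $c$. Both are valid, and the rest of your computation (including the observation that the intermediate vertex is a legitimate, distinct vertex) checks out.
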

\begin{proof}
    Notice that $(a_1,a_2,\dots,a_k)\strongperp(\textbf{0},\textbf{0},\dots,\textbf{1})\strongperp(b_1,b_2,\dots,b_k)$. It is enough to prove the first one. For all $(x_1,x_2,\dots,x_k)\in A$ it is true that 
    \begin{equation}
        \begin{split}\nonumber
            \max\{\|a_1+\textbf{0}x_1\|,\|a_2+\textbf{0}x_2\|,\dots,\|a_k+\textbf{1}x_k\|\}&\geqslant\max\{\|a_1\|,\|a_2\|,\dots,\|a_{k-1}\|\}\\
            &=\max\{\|a_1\|,\dots,\|a_{k-1}\|,\|a_{k}\|\},
        \end{split}
    \end{equation}
    where the last equality is valid because $\|a_k\|\leqslant\|a_i\|$.

    Further, in every unital $C^{*}$-algebra it is true that $\textbf{1}\strongp a_k$ for every not right invertible $a_k$. Indeed, there is a pure state $\rho$ such that $\rho(a_ka_k^{*})=0$ because $a_k$ is not right invertible. In the end, $\rho(\textbf{1}\textbf{1}^{*})=1=\|\textbf{1}\|^2$, because $\rho$ is state. So, by Lemma 2.3. from \cite{Keckic2023JMAA}, we get $\textbf{1}\strongp a_k$. From there, for any $x_k\in A_k$, $\|\textbf{1}+a_kx_k\|\geqslant\|\textbf{1}\|$, and finally for every $(x_1,x_2,\dots,x_k)\in A$ it is true that
    $$\max\{\|\textbf{0}+a_1x_1\|,\|\textbf{0}+a_2x_2\|,\dots,\|\textbf{1}+a_kx_k\|\}\geqslant\|\textbf{1}\|=\max\{\|\textbf{0}\|,\|\textbf{0}\|,\dots,\|\textbf{1}\|\}.$$
\end{proof}

\begin{theorem}\label{kombinacija neinvertibilnih}
    Let $(a_1,a_2,\dots,a_k),(b_1,b_2,\dots,b_k)\in A$ such that $a_k$ and $b_k$ are only not right invertible. Further, let there be $a_i(i\neq k)$ such that $\|a_i\|\geqslant\|a_k\|$ and let $\|b_k\|>\|b_j\|$ for every $j\in[1,k-1]$. Then distance between $(a_1,a_2,\dots,a_k)$ and $(b_1,b_2,\dots,b_k)$ is at most 3. 
\end{theorem}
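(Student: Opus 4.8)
The plan is to exhibit an explicit path of length three,
$(a_1,\dots,a_k)\strongperp E\strongperp Z\strongperp(b_1,\dots,b_k)$,
where $E=(\textbf{0},\dots,\textbf{0},\textbf{1})$ is the unit placed in the last coordinate and $Z$ is a vertex to be chosen.

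First I would check that $E$ is adjacent to $(a_1,\dots,a_k)$. The relation $(a_1,\dots,a_k)\strongp E$ is immediate from the hypothesis $\|a_i\|\geqslant\|a_k\|$ for some $i\neq k$: altering only the last coordinate cannot push the norm below $\max_{j\neq k}\|a_j\|$, which already equals $\|(a_1,\dots,a_k)\|$. The relation $E\strongp(a_1,\dots,a_k)$ repeats the computation from the proof of Theorem \ref{neinvertibilan nenajveci}: since $a_k$ is not right invertible we have $\textbf{1}\strongp a_k$ in $A_k$ (a pure state annihilating $a_ka_k^{*}$, followed by Lemma 2.3 of \cite{Keckic2023JMAA}), so the last coordinate alone keeps $\|E+(a_1,\dots,a_k)c\|\geqslant 1$ for every $c$.

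The point where an argument for distance $\leqslant 2$ breaks down is that $E$ is \emph{not} adjacent to $(b_1,\dots,b_k)$: by Theorem \ref{najvec1i neinvertibilan}, every neighbour $(y_1,\dots,y_k)$ of $(b_1,\dots,b_k)$ must satisfy $b_k\strongperp y_k$, while $b_k\strongperp\textbf{1}$ would force $b_k=\textbf{0}$, impossible as $\|b_k\|>\|b_j\|\geqslant 0$. So I need an intermediate vertex $Z$ between $E$ and $(b_1,\dots,b_k)$. Using \cite{Keckic2023JMAA}, Proposition 2.4, I would pick $z_k\neq\textbf{0}$ in $A_k$ with $b_k\strongperp z_k$, rescale it so that $\|z_k\|=1$, and put $Z=(\textbf{1},\textbf{0},\dots,\textbf{0},z_k)$ with the unit of $A_1$ in the first coordinate (here $k\geqslant 2$ is used). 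Then $Z\strongperp(b_1,\dots,b_k)$ follows at once from Lemma \ref{neinvertibilni najveci dovoljno za ortogonalnost}, because $\|b_k\|>\|b_j\|$, $\|z_k\|=1\geqslant\|z_j\|$ for $j<k$, and $b_k\strongperp z_k$. For $Z\strongperp E$: the relation $Z\strongp E$ is trivial since the first coordinate $\textbf{1}$ of $Z$ is untouched by $E$ and already realizes $\|Z\|=1$; the relation $E\strongp Z$ needs $\textbf{1}\strongp z_k$ in $A_k$, which holds because $z_k$ is not right invertible — if it were, $z_kc$ would run over all of $A_k$ and $b_k\strongp z_k$ would yield $b_k=\textbf{0}$. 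Composing the three edges gives the bound; should two consecutive vertices coincide, the walk only gets shorter, so the distance is at most $3$ in every case.

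The step I expect to be the main obstacle is the construction of $Z$. Adjacency to $(b_1,\dots,b_k)$ forces (via Theorem \ref{najvec1i neinvertibilan}) the norm of $Z$ to be attained in the last coordinate, paired with $b_k\strongperp z_k$; adjacency to $E$ forces the norm of $Z$ to be attained also in some coordinate $<k$ (otherwise substituting $-z_k$ in the last slot lowers the norm of $Z+Ec$ below $\|Z\|$). These two demands look incompatible, and the resolution is to make the norm of $Z$ \emph{tie} between the last coordinate and one earlier coordinate — which is available precisely because $A_1$ is unital and $b_k$, being not right invertible, admits a nonzero mutually strong orthogonal partner in $A_k$.
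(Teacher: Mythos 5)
Your argument is correct and is essentially the paper's own proof: the same path through $(\textbf{0},\dots,\textbf{0},\textbf{1})$ and a middle vertex built from a nonzero element mutually strong orthogonal to $b_k$, with the first edge from Theorem \ref{neinvertibilan nenajveci}, the last from Lemma \ref{neinvertibilni najveci dovoljno za ortogonalnost}, and the middle edge split into the trivial direction (an untouched unit coordinate realizing the norm) and the direction using $\textbf{1}\strongp z_k$ for not right invertible $z_k$. The only cosmetic difference is your choice $Z=(\textbf{1},\textbf{0},\dots,\textbf{0},z_k)$ with $\|z_k\|=1$ instead of the paper's $(\|b_k'\|\textbf{1},\dots,\|b_k'\|\textbf{1},b_k')$, which changes nothing in the argument.
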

\begin{proof}
    Because $b_k$ is not right invertible, there is non-zero not right invertible $b_k'$ such that $b_k\strongperp b_k'$. But then it is true
    $$(a_1,a_2,\dots,a_k)\strongperp(\textbf{0},\textbf{0},\dots,\textbf{1})\strongperp(\|b_k'\|\textbf{1},\|b_k'\|\textbf{1},\dots,b_k')\strongperp(b_1,b_2,\dots,b_k).$$
    First mutual strong $BJ$-orthogonality is proved in Theorem \ref{neinvertibilan nenajveci}, as well as the first direction of the second mutual strong $BJ$-orthogonality. Reverse direction is trivial because for all $(x_1,x_2,\dots,x_k)\in A$
    \begin{equation}
        \begin{split}\nonumber
            &\max\{\|\|b_k'\|\textbf{1}+\textbf{0}x_1\|,\|\|b_k'\|\textbf{1}+\textbf{0}x_2\|,\dots,\|b_k'+\textbf{1}x_k\|\}\\
            &\geqslant\|b_k'\|=\max\{\|\|b_k'\|\textbf{1}\|,\|\|b_k'\|\textbf{1}\|,\dots,\|b_k'\|\}.
        \end{split}
    \end{equation}
 Third mutual strong $BJ$-orthogonality is true by Lemma \ref{neinvertibilni najveci dovoljno za ortogonalnost}, which concludes the proof.
\end{proof}
\begin{remark}\label{nezgodno C}
    Therefore, the only case in which the distance can be greater than 3 (then it is 4, because it is always smaller than 5) is when both right non-invertibles are such that the norm is reached on them.
\end{remark}

\section{Diameter of finite dimensional $C^*$-algebras}

According to the remark \ref{nezgodno C}, the problem arises in the case in which the Theorem \ref{najvec1i neinvertibilan} should be applied. In the case where $A_k=\mathbb{C}$, there is no non-invertible such that its norm is greater than the other elements ($0$ is the only one). Also, the problem in the same case is to determine the diameter of the algebra $A_k$. We distinguish the cases when it is $\mathbb{C}, M_2(\mathbb{C}), M_3(\mathbb{C})$ or $M_n(\mathbb{C}), n\geqslant4$. Let us recall that in $M_n(\mathbb{C})$ the fact that an element is not right invertible is equivalent to the fact that it is non-invertible.

\subsection{Case of $\mathbb{C}^{k}, k\in\mathbb{N}$}
\subsubsection{Case of $\mathbb{C}\oplus\mathbb{C}$}
There are only two nonisolated vertices in this $C^{*}$-algebra, $(0,1)$ and $(1,0)$. They are mutual strong orthogonal, so, there is only one connected component and its diameter is 1.

\subsubsection{Case of $\mathbb{C}^{k}, k\geqslant3$}
We will show that $\diam(\mathbb{C}^k)=3$ for all $k\geqslant3$. If $a_k$ and $b_k$ are non-invertible (so $a_k=b_k=0$), by Theorem \ref{neinvertibilan nenajveci} their distance is at most 2. And if they are on different positions, their distance is at most 3 by Theorem \ref{neinvertibilni na razlicitim koordinatama}.

It remains to find the vertices whose distance is 3. 
\begin{example}
    The distance between $(0,1,2,1,\dots,1)$ and $(2,0,1,1,\dots,1)$ is 3. 

    If $(0,1,2,1,\dots,1)\strongperp(a_1,a_2,a_3,\dots,a_n)$, then for all $\lambda_1,\lambda_2,\dots,\lambda_n\in\mathbb{C}$ 
    $$\|(0+a_1\lambda_1,1+a_2\lambda_2,2+a_3\lambda_3,\dots,1+\lambda_na_n)\|\geqslant2,$$
    and if $a_3\neq0$, we just put $\lambda_3=-\frac{2}{a_3}$ and $\lambda_i=0$ for $i\neq3$ and get a contradiction. So $a_3=0$. Next, for all $\lambda_1,\lambda_2,\dots,\lambda_n\in\mathbb{C}$ it must be true 
    $$\|(a_1+0\lambda_1,a_2+1\lambda_2,0+2\lambda_3,\dots,a_n+1\lambda_n)\|\geqslant\max\{\|a_1\|,\|a_2\|,\dots,\|a_n\|\}.$$
    Now put $\lambda_k=-a_k$ for $k\neq3$ and $\lambda_3=0$ and get $\|a_1\|\geqslant\|a_k\|$ for all $k\in[1,n]$. 

    In the same way, if $(2,0,1,1,\dots,1)\strongperp(b_1,b_2,b_3,\dots,b_n)$ we get $b_1=0$ and $\|b_2\|\geqslant\|b_k\|$ for all $k\in[1,n]$. 

    So it is not true that $(0,1,2,1,\dots,1)\strongperp(2,0,1,1,\dots,1)$.

    If there is some $(a_1,a_2,a_3,\dots,a_n)$ mutual strong orthogonal to both vertices, than $a_1=0$ and its norm is greater or equal to all others, so, $a_n=0$ for all $n\in\mathbb{N}$.

    Then the distance is greater than 2, which implies that it is 3.
\end{example}

So we proved 
\begin{lemma}\label{Lema 1}
    $\diam(\mathbb{C}^2)=1$ and $\diam(\mathbb{C}^k)=3$ for $k\geqslant3$. 
\end{lemma}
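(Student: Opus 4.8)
The plan is to establish Lemma \ref{Lema 1} by separating the two asserted equalities and, within the second, by proving an upper bound of $3$ valid for all $k\geqslant 3$ and then exhibiting a pair of vertices whose distance is exactly $3$. The case $\diam(\mathbb{C}^2)=1$ is essentially immediate: the non-isolated vertices of $\mathbb{C}\oplus\mathbb{C}$ are exactly those $(a_1,a_2)$ with some coordinate non-invertible, i.e.\ with some $a_i=0$, so up to scalars the only non-isolated vertices are $(1,0)$ and $(0,1)$; by Lemma \ref{ortogonalni po koordinatama} they satisfy $(1,0)\strongperp(0,1)$, giving a connected orthograph on two vertices, hence diameter $1$.

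For the upper bound when $k\geqslant 3$, I would argue that any two non-isolated vertices $(a_1,\dots,a_k)$ and $(b_1,\dots,b_k)$ of $\mathbb{C}^k$ are at distance at most $3$ by invoking the machinery already assembled in Section 2. Since the only non-invertible scalar is $0$, ``not right invertible'' here means ``equal to $0$''. If the zero coordinates of the two vertices occur at different positions $i\neq j$, Theorem \ref{neinvertibilni na razlicitim koordinatama} gives distance $\leqslant 3$ directly. If the zero coordinates occur at a common position, say position $k$ after relabelling, so $a_k=b_k=0$, then because every other coordinate of each vertex is nonzero we have $\|a_i\|\geqslant\|a_k\|=0$ and $\|b_j\|\geqslant\|b_k\|=0$ for any $i,j\neq k$; in fact since both vertices are non-isolated they have \emph{some} zero coordinate, so this covers all remaining cases once we also note that the ``norm attained on the non-invertible'' obstruction of Remark \ref{nezgodno C} cannot happen in $\mathbb{C}$ (the only non-invertible is $0$, on which the norm is never the maximum of a non-isolated vertex). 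Thus Theorem \ref{neinvertibilan nenajveci} applies and gives distance $\leqslant 2$. Combining the cases yields $\diam(\mathbb{C}^k)\leqslant 3$.

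For the lower bound I would carry out exactly the computation in the Example above: take $u=(0,1,2,1,\dots,1)$ and $v=(2,0,1,1,\dots,1)$ in $\mathbb{C}^k$. Using Theorem \ref{karakterizacija} (in the one-dimensional form, where $a\strongp b$ with $a,b\in\mathbb{C}$ just says $a=0$ or $b=0$), one checks first that $u\not\strongperp v$, since $u\strongp v$ would force the third coordinate relation to fail: plugging $\lambda_3=-2/v_3$, $\lambda_i=0$ otherwise, violates $\|u+v\lambda\|\geqslant\|u\|=2$. Then one shows that any common neighbour $w=(w_1,\dots,w_k)$ of both $u$ and $v$ is forced to be $\mathbf 0$: orthogonality $u\strongperp w$ forces $w_3=0$ (to keep $\|u+w\lambda\|\geqslant 2$) together with $\|w_1\|\geqslant\|w_i\|$ for all $i$ (by choosing $\lambda_k=-w_k$ for $k\neq 3$ and examining $\|w+u\lambda\|\geqslant\|w\|$); symmetrically $v\strongperp w$ forces $w_1=0$ and $\|w_2\|\geqslant\|w_i\|$ for all $i$. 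From $w_1=0$ and $\|w_2\|\geqslant\|w_i\|\geqslant 0$ one gets nothing directly, but combining $\|w_1\|\geqslant\|w_i\|$ with $w_1=0$ forces every $w_i=0$, contradicting $w$ being a vertex. Hence $d(u,v)>2$, and together with the upper bound $d(u,v)=3$, so $\diam(\mathbb{C}^k)\geqslant 3$.

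The main obstacle is not any single hard estimate — each step is a short norm computation — but rather making the case analysis in the upper-bound argument genuinely exhaustive and correctly aligned with the hypotheses of Theorems \ref{neinvertibilni na razlicitim koordinatama}, \ref{neinvertibilan nenajveci}, and \ref{kombinacija neinvertibilnih}. In particular one must be careful that ``non-isolated vertex'' is used correctly (at least one zero coordinate, by Lemma 5.3 of \cite{Keckic2023JMAA}), that the scalar reparametrisation $c_i=-a_i^{-1}b_i$ is legitimate precisely because the non-zero coordinates of $\mathbb{C}$ are invertible, and that the ``norm attained on a non-invertible element'' pathology of Remark \ref{nezgodno C} is vacuous for $A_i=\mathbb{C}$, which is exactly why the scalar case behaves better than the matrix cases treated in the subsequent subsections.
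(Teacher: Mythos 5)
Your proposal is correct and follows essentially the same route as the paper: the same identification of the two non-isolated vertices in $\mathbb{C}\oplus\mathbb{C}$, the same case split (common zero position via Theorem \ref{neinvertibilan nenajveci}, different positions via Theorem \ref{neinvertibilni na razlicitim koordinatama}) for the upper bound, and the very same pair $(0,1,2,1,\dots,1)$, $(2,0,1,1,\dots,1)$ with identical norm computations for the lower bound. No substantive differences to report.
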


\begin{remark}
    It is interesting that diameter in $\mathbb{C}^3$ (with a $\max$ norm) is smaller than the diameter in orthograph related to standard Birkhoff-James orthogonality (which is weaker) considered in \cite{Kuzma2021JMAA}. In that case, the diameter is 4 (see Propostion 5.15). This is because in the case of strong $BJ$-orthogonality there are more isolated vertices, so despite the stronger relation, the number of vertices in connected component is smaller. 
\end{remark}

\begin{theorem}\label{dijametar veci od 2}
    Let $A=M_{n_1}(\mathbb{C})\oplus M_{n_2}(\mathbb{C})\oplus\dots M_{n_k}(\mathbb{C}),\ k\geqslant2$ such that $A\not\cong\mathbb{C}^n$ for an arbitrary $n\in\mathbb{N}$. Then $\diam A\geqslant3$.
\end{theorem}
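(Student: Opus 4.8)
The plan is to produce, for an arbitrary such $A$, two non-isolated vertices whose distance exceeds $2$, thereby forcing $\diam A \geqslant 3$. Since $A \not\cong \mathbb{C}^n$, at least one summand, say $M_{n_j}(\mathbb{C})$, satisfies $n_j \geqslant 2$; after reordering we may assume this is $M_{n_k}(\mathbb{C})$ with $n_k \geqslant 2$. The natural candidates are then vertices built out of a fixed rank-one projection $P$ in $M_{n_k}(\mathbb{C})$. Following the flavour of the earlier example in the excerpt, I would try something like $u = (\mathbf{0},\mathbf{0},\dots,\mathbf{0},P)$ and $v = (\mathbf{0},\mathbf{0},\dots,\mathbf{0},I - P)$ when $k \geqslant 2$, or, to be safe about degenerate situations, weight the other coordinates so that the norm is genuinely attained on the last coordinate. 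Both are clearly non-isolated, since each has a non-invertible (indeed, zero or properly-supported) entry, by the characterization recalled from \cite{Keckic2023JMAA}.

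First I would check that $u$ and $v$ are \emph{not} themselves mutually strong $BJ$-orthogonal, so that their distance is at least $2$; this reduces to a computation inside $M_{n_k}(\mathbb{C})$ using Theorem~\ref{karakterizacija}, together with Lemma~\ref{neinvertibilni najveci dovoljno za ortogonalnost} to see that orthogonality of the whole tuples is governed by the last coordinate once the norms are concentrated there. Then the crux: I would show that there is no vertex $w = (w_1,\dots,w_k)$ with $u \strongperp w$ and $w \strongperp v$. The idea is that $u \strongperp w$ forces, via Theorem~\ref{najvec1i neinvertibilan} (applied in the situation where the last coordinate carries the norm and is the only non-invertible entry), a rigid constraint on $w_k$ — namely $P \strongperp w_k$ in $M_{n_k}(\mathbb{C})$ and $\|w_k\| \geqslant \|w_i\|$ for all $i<k$. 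Symmetrically $w \strongperp v$ forces $(I-P) \strongperp w_k$ with the same norm condition. Unpacking both using Theorem~\ref{karakterizacija} should give incompatible requirements on $w_k$ (e.g.\ the unit vectors realizing the two conditions would have to simultaneously lie in $\mathrm{Ker}\,P$ and in $\mathrm{Ker}(I-P)$ after multiplying by $w_k$, or similar), yielding $w_k = \mathbf{0}$, which contradicts $\|w_k\| \geqslant \|w_i\|$ together with $w \neq \mathbf{0}$.

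I expect the main obstacle to be handling the case $k = 2$ together with small $n_k$ uniformly, and making sure the chosen vertices really have their norm attained on the last coordinate so that Theorem~\ref{najvec1i neinvertibilan} applies cleanly; if the plain choice $u = (\mathbf{0},P)$, $v = (\mathbf{0}, I-P)$ does not cooperate (because $\mathbf{0}$ in the other slot makes the hypotheses of Theorem~\ref{najvec1i neinvertibilan} degenerate or makes a short path appear through $(\mathbf{1},\mathbf{0})$-type vertices), I would instead take the other coordinates to be scalar multiples of the identity of norm strictly between $0$ and $\|P\| = 1$, e.g.\ $u = (\tfrac12\mathbf{1},\dots,\tfrac12\mathbf{1},P)$ and $v = (\tfrac12\mathbf{1},\dots,\tfrac12\mathbf{1}, I-P)$, so that every entry except the last is invertible and the last is the unique non-invertible entry on which the norm is attained — exactly the configuration isolated in Remark~\ref{nezgodno C}. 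The remaining work is then purely the two-sided rigidity argument in $M_{n_k}(\mathbb{C})$ sketched above, and concluding $\diam A \geqslant 3$.
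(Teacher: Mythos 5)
Your reduction to the last coordinate is the right one (invertible, small-norm entries elsewhere plus Theorem~\ref{najvec1i neinvertibilan} is exactly how the paper proceeds), but the concrete pair you propose does not work. For a rank-one projection $P$ one has $P^{*}(I-P)=P-P^{2}=\textbf{0}$, so $P\strongperp(I-P)$ already in $M_{n_k}(\mathbb{C})$; by Lemma~\ref{neinvertibilni najveci dovoljno za ortogonalnost} your vertices $u=(\tfrac12\textbf{1},\dots,\tfrac12\textbf{1},P)$ and $v=(\tfrac12\textbf{1},\dots,\tfrac12\textbf{1},I-P)$ (and likewise the version with $\textbf{0}$'s) are then \emph{adjacent}, so they witness nothing. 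The planned ``first check that $u$ and $v$ are not mutually strong $BJ$-orthogonal'' would therefore fail at the very first step. Moreover, the rigidity you hope for — that $P\strongperp w_k$ and $(I-P)\strongperp w_k$ force $w_k=\textbf{0}$ — is not available in general: for instance, for $n_k\geqslant4$ any two non-orthogonal, non-proportional rank-one projections $P=xx^{*}$, $Q=yy^{*}$ admit a common neighbour (any nonzero $c$ with $\Img c\subseteq\{x,y\}^{\perp}$ satisfies $c\strongperp P$ and $c\strongperp Q$ via Theorem~\ref{karakterizacija}), so naive projection pairs sit at distance at most $2$ and cannot prove the bound.

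What is missing is the selection of the pair in the matrix summand, and this is precisely where the paper invokes Theorem~\ref{B(H)}: since $n_k\geqslant2$, the orthograph $\Gamma(M_{n_k}(\mathbb{C}))$ either is disconnected on its non-invertible part ($n_k=2$, take two rank-one matrices in different components $\mathcal S_x$) or has diameter $4$ resp.\ $3$ ($n_k=3$ resp.\ $n_k\geqslant4$), so in every case one can choose nonzero non-invertible $a_k,b_k$ whose distance in $\Gamma(M_{n_k}(\mathbb{C}))$ is strictly greater than $2$. With these $a_k,b_k$ (and invertible, strictly smaller-norm entries in the other coordinates) your own argument goes through verbatim: adjacency of the two tuples would give $a_k\strongperp b_k$ by Theorem~\ref{najvec1i neinvertibilan}, and a common neighbour $w$ would give $w_k\neq\textbf{0}$ with $w_k\strongperp a_k$ and $w_k\strongperp b_k$, i.e.\ a path of length $2$ between $a_k$ and $b_k$ — both contradicting the choice of the pair. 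So the skeleton of your proof is correct, but without replacing $(P,I-P)$ by a genuinely distance-$\geqslant3$ pair supplied by Theorem~\ref{B(H)} the argument collapses.
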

\begin{proof}
    Let us assume, say $M_{n_k}(\mathbb{C})$ such that $n_k\geqslant2$. By Theorem \ref{B(H)}, we know that diameter of $M_{n_k}(\mathbb{C})$ is bigger than 2, or has two disconnected components (in the case of $M_2(\mathbb{C})$). Be that as it may, we can choose two non-invertible, non-zero elements $a_k,b_k\in M_{n_k}(\mathbb{C})$ such that their distance is bigger than 2. If we choose all other $a_i,b_i\ (i\in[1,k-1])$ to be invertible and with a smaller norm than $\|a_k\|$ and $\|b_k\|$, respectively, by Theorem \ref{najvec1i neinvertibilan} we immediately conclude that $\diam A>2$. Otherwise, $(a_1,a_2,\dots,a_k)$ and $(b_1,b_2,\dots,b_k)$ are mutual strong $BJ$-orthogonal or there is $(c_1,c_2,\dots,c_k)$ mutual strong $BJ$-orthogonal to both. But again by Theorem \ref{najvec1i neinvertibilan}, this is impossible, because distance between $a_k$ and $b_k$ is bigger than 2.  
\end{proof}

For the sake of clarity, we will first examine cases with two summands, and then move on the cases with three or more.

\subsection{Cases with two summands}
\subsubsection{Case of $M_n(\mathbb{C})\oplus M_k(\mathbb{C}),n,k\geqslant2$}
We will prove that $\diam(M_n(\mathbb{C})\oplus M_k(\mathbb{C}))$ is equal to 3. We know by Theorem \ref{dijametar veci od 2} that diameter is at least 3, and by theorems \ref{neinvertibilni na razlicitim koordinatama}, \ref{najvec1i neinvertibilan}, \ref{neinvertibilan nenajveci}, \ref{kombinacija neinvertibilnih}, we must examine only the case where non-invertible elements are at the same place and have strictly bigger norm than invertible elements.
\begin{lemma}
    Let $(a,b),(c,d)\in M_n(\mathbb{C})\oplus M_k(\mathbb{C})$ such that $a,b$ are non-invertible and $c,d$ are invertible matrices. Further, suppose that $\|a\|>\|b\|$ and $\|c\|>\|d\|$. Then the distance between them is at most 3.
\end{lemma}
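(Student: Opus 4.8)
The plan is to construct an explicit path of length three connecting $(a,b)$ and $(c,d)$ by routing through two intermediate vertices, exploiting the fact that the non-invertible matrices $a$ and $b$ dominate the norm on each side. Because $a$ is non-invertible, by Theorem \ref{karakterizacija} there is a unit vector $x$ with $\|ax\|=\|a\|$ and a direction in which we can attach a neighbour; the same holds for $b$. The key structural observation is that $(c,d)$ has both coordinates invertible, so $(c,d)$ is an isolated vertex unless we are careful — but here we do not need $(c,d)$ to have a neighbour of its own; instead we build the path \emph{from} $(a,b)$ and arrange the last edge to land on $(c,d)$ using Theorem \ref{najvec1i neinvertibilan} in its necessary-condition form together with Lemma \ref{neinvertibilni najveci dovoljno za ortogonalnost} in its sufficient-condition form.

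The concrete route I would use is
$$(a,b)\strongperp(\textbf{0},\textbf{1})\strongperp(\textbf{1},\textbf{0})\strongperp(c,d).$$
First I would verify $(a,b)\strongperp(\textbf{0},\textbf{1})$. Since $\|a\|>\|b\|$, the element $a$ is the unique norm-attaining coordinate and it is non-invertible; the argument is exactly the one in the proof of Theorem \ref{neinvertibilan nenajveci}: in one direction, putting the perturbation only on the first coordinate cannot lower the max below $\|a\|$ because $\textbf{0}$ cannot cancel $a$, and in the other direction $\textbf{1}\strongp b$ holds in every unital $C^{*}$-algebra whenever $b$ is not right invertible (via a pure state $\rho$ with $\rho(bb^{*})=0$ and $\rho(\textbf{1}\textbf{1}^{*})=1$). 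The middle edge $(\textbf{0},\textbf{1})\strongperp(\textbf{1},\textbf{0})$ is immediate from Lemma \ref{ortogonalni po koordinatama}, since $\textbf{0}\strongperp\textbf{1}$ and $\textbf{1}\strongperp\textbf{0}$ coordinatewise.

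The only genuinely delicate edge is the last one, $(\textbf{1},\textbf{0})\strongperp(c,d)$, and this is where I expect the main obstacle to lie. Here the second coordinate of $(\textbf{1},\textbf{0})$ is $\textbf{0}$, whose norm is attained trivially, while $c,d$ are invertible; so I cannot invoke the non-invertibility machinery and must check the two strong-orthogonality conditions directly. For $(\textbf{1},\textbf{0})\strongp(c,d)$ I would note $\|\textbf{1}+c x_1\|\geqslant\|\textbf{1}\|$ fails in general, so this naive path does not work, and the symmetric route through $(\textbf{1},\textbf{0})$ must be replaced. The correct intermediate vertices are
$$(a,b)\strongperp(\textbf{0},\textbf{1})\strongperp(\|d'\|\textbf{1},\,d'^{\,?})\cdots$$
is not available either, because $d$ is invertible and has no strong-orthogonal partner. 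Consequently the real plan is to route through vertices built from the \emph{non-invertible} data on the $(a,b)$ side alone and to land on $(c,d)$ by a sufficiency lemma: take a non-invertible $b'\neq\textbf{0}$ with $b\strongperp b'$ (which exists since $b$ is non-invertible), and use
$$(a,b)\strongperp(\textbf{0},\textbf{1})\strongperp(\|a'\|\textbf{1},\,a')\strongperp(c,d),$$
where $a'\neq\textbf{0}$ is non-invertible with $a\strongperp a'$. The last edge then follows from Lemma \ref{neinvertibilni najveci dovoljno za ortogonalnost}: one checks the max-norm is dominated by the second coordinate on one side and by the first on the other, with $a\strongperp a'$ supplying the coordinatewise strong orthogonality that the lemma upgrades to the direct sum. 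The main work is confirming that the norm inequalities in Lemma \ref{neinvertibilni najveci dovoljno za ortogonalnost} are satisfied for this choice of scalars $\|a'\|$, i.e.\ that the inflated first coordinate $\|a'\|\textbf{1}$ does not dominate, which is precisely the hypothesis-matching step I would carry out in detail.
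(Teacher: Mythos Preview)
You have misread the hypotheses (the lemma as printed contains a typo). From the surrounding text and from the paper's own proof it is clear that the intended assumptions are that the \emph{first} coordinates $a,c\in M_n(\mathbb{C})$ are the non-invertible ones and the \emph{second} coordinates $b,d\in M_k(\mathbb{C})$ are invertible, with $\|a\|>\|b\|$ and $\|c\|>\|d\|$; this is exactly the ``non-invertibles at the same position with dominant norm'' situation left over after Theorems~\ref{neinvertibilni na razlicitim koordinatama}, \ref{neinvertibilan nenajveci}, \ref{kombinacija neinvertibilnih}. Under your literal reading, $(c,d)$ has both coordinates invertible and is an isolated vertex of $\Gamma(A)$, so no path of any length can reach it; your aside that ``we do not need $(c,d)$ to have a neighbour of its own'' is precisely where the argument collapses, since the last edge of any path \emph{is} a neighbour of $(c,d)$.

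Under the intended hypotheses your proposed routes also fail. The edge $(a,b)\strongperp(\textbf{0},\textbf{1})$ breaks in the direction $(\textbf{0},\textbf{1})\strongp(a,b)$: because $b$ is invertible, choosing $x_1=0$ and $x_2=-b^{-1}$ gives $\max\{\|ax_1\|,\|\textbf{1}+bx_2\|\}=0<1$. Your final edge $(\|a'\|\textbf{1},a')\strongperp(c,d)$ cannot be deduced from Lemma~\ref{neinvertibilni najveci dovoljno za ortogonalnost} either: that lemma needs one \emph{fixed} coordinate on which both elements attain their norm and are strongly orthogonal, whereas the relation $a\strongperp a'$ you invoke involves neither $c$ nor $d$. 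The paper instead works symmetrically from both ends: pick non-zero non-invertible $a_1,c_1$ with $a\strongperp a_1$ and $c\strongperp c_1$, and use
\[
(a,b)\strongperp\bigl(a_1,\diag(\|a_1\|,0,\dots,0)\bigr)\strongperp\bigl(c_1,\diag(0,\dots,0,\|c_1\|)\bigr)\strongperp(c,d).
\]
The outer edges follow from Lemma~\ref{neinvertibilni najveci dovoljno za ortogonalnost} applied to the first coordinate (using $a\strongperp a_1$ and $c\strongperp c_1$), and the middle edge from the same lemma applied to the second coordinate, since the two diagonal $k\times k$ matrices have product zero and each has norm equal to that of its companion first coordinate. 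The key idea you are missing is that \emph{both} endpoints carry a non-invertible first coordinate, so the bridge should be built from both sides, not just from $(a,b)$.
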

\begin{proof}
        

    Because $a$ and $c$ are non-invertible, there are non-zero, non-invertible $a_1$ and $c_1$ such that $a\strongperp a_1$ and $c\strongperp c_1$. Then it is true
    $$(a,b)\strongperp(a_1,\diag(\|a_1\|,0,\dots,0))\strongperp(c_1,\diag(0,\dots,0,\|c_1\|))\strongperp(c,d),$$
    where $\diag(\|a_1\|,0,\dots,0)$ denotes $k\times k$ diagonal matrix which only non-zero entry is in the first row and the first column and it is $\|a_1\|$.

    First and third mutual strong $BJ$-orthogonality is valid by Lemma \ref{neinvertibilni najveci dovoljno za ortogonalnost}.


    Finally, let us note that $(\diag(\|a_1\|,0,\dots,0))^{*}\diag(0,\dots,0,\|c_1\|)=\textbf{0}$, so they are mutual strong $BJ$-orthogonal. Again, by Lemma \ref{neinvertibilni najveci dovoljno za ortogonalnost}, second $BJ$-orthogonality is valid.
\end{proof}

Thus, we proved 
\begin{lemma}\label{Lema 2}
    $\diam(M_n(\mathbb{C})\oplus M_k(\mathbb{C}))=3$, for all $n,k\geqslant2$.
\end{lemma}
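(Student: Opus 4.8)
\textbf{Plan for the proof of Lemma \ref{Lema 2}.} The statement $\diam(M_n(\C)\oplus M_k(\C))=3$ is obtained by combining the lower bound $\diam\geqslant 3$ with an upper bound $\diam\leqslant 3$. The lower bound is already furnished by Theorem \ref{dijametar veci od 2}, since $M_n(\C)\oplus M_k(\C)$ with $n,k\geqslant 2$ is not isomorphic to any $\C^m$. So the entire content of the lemma is the upper bound: any two non-isolated vertices are at distance at most $3$. Here ``non-isolated'' means, by Lemma 5.3 of \cite{Keckic2023JMAA}, that each of $(a,b)$ and $(c,d)$ has at least one non-invertible coordinate.

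\textbf{Case analysis on the positions of the non-invertible coordinates.} I would organize the proof as an exhaustive case split, invoking the theorems of Section~2 for all but one case. First, if $(a,b)$ has a non-invertible coordinate in one position and $(c,d)$ has one in the other position (e.g.\ $a$ and $d$ both non-invertible, nonzero), Theorem \ref{neinvertibilni na razlicitim koordinatama} gives distance $\leqslant 3$. So we may assume that all non-invertible coordinates lie in the same position; relabel so this is the second coordinate, i.e.\ $b$ and $d$ are non-invertible while $a$ and $c$ are invertible. (If instead both vertices have \emph{both} coordinates non-invertible, that is a sub-case handled by the same theorems, since then in particular both have a non-invertible first coordinate and one can again apply Theorem \ref{neinvertibilni na razlicitim koordinatama} or the remark after it.) Next, split according to whether the norm is attained on the non-invertible coordinate. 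If $\|b\|\leqslant\|a\|$ or $\|d\|\leqslant\|c\|$ — i.e.\ the non-invertible coordinate is not strictly dominant in at least one of the vertices — then Theorems \ref{neinvertibilan nenajveci} and \ref{kombinacija neinvertibilnih} cover all the remaining configurations (distance $\leqslant 2$ or $\leqslant 3$ respectively, possibly after using Theorem \ref{najvec1i neinvertibilan} in its contrapositive form). The only surviving case, as Remark \ref{nezgodno C} records, is $\|b\|>\|a\|$ and $\|d\|>\|c\|$: both non-invertible coordinates are strictly norm-dominant. But that is precisely the situation covered by the Lemma whose proof we are writing (with the roles of the coordinates swapped relative to the statement, which is harmless). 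So the real work is entirely in that last case.

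\textbf{The construction in the hard case.} Suppose then $b,d\in M_k(\C)$ are non-invertible with $\|b\|>\|a\|$, $\|d\|>\|c\|$. Since $b$ is non-invertible there is (by Proposition 2.4 of \cite{Keckic2023JMAA}) a nonzero non-invertible $b_1\in M_k(\C)$ with $b\strongperp b_1$; likewise a nonzero non-invertible $d_1$ with $d\strongperp d_1$. I would exhibit the length-$3$ path
$$(a,b)\strongperp\bigl(b_1,\diag(\|b_1\|,0,\dots,0)\bigr)\strongperp\bigl(d_1,\diag(0,\dots,0,\|d_1\|)\bigr)\strongperp(c,d),$$
where $\diag(\|b_1\|,0,\dots,0)$ is the $n\times n$ matrix with that single nonzero diagonal entry (note the intermediate vertices live in $M_k(\C)\oplus M_n(\C)$, matching the algebra; I would present the diagonal blocks as $n\times n$ to be definite — the precise shape is immaterial as long as the middle two matrices are nonzero and their product is zero). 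The first and last orthogonalities follow from Lemma \ref{neinvertibilni najveci dovoljno za ortogonalnost}: in $(a,b)\strongperp(b_1,\diag(\|b_1\|,0,\dots,0))$ the second coordinates $b$ and $b_1$ are mutually strong orthogonal and they dominate in norm the first coordinates on both sides (here one uses $\|b\|>\|a\|$ and, by a suitable normalization of $b_1$, $\|b_1\|\geqslant\|b_1\|$ trivially for the middle vertex since its first coordinate has norm $\|b_1\|$ equal to the second — so actually one needs ``$\geqslant$'' in Lemma \ref{neinvertibilni najveci dovoljno za ortogonalnost}, which is exactly how it is stated). The middle orthogonality $(b_1,\diag(\|b_1\|,0,\dots,0))\strongperp(d_1,\diag(0,\dots,0,\|d_1\|))$ is obtained by observing that the first coordinates $b_1,d_1$ are placed against zero blocks suitably, and crucially $\diag(\|b_1\|,0,\dots,0)^*\diag(0,\dots,0,\|d_1\|)=\textbf{0}$, so those second coordinates are mutually strong orthogonal; since (after normalization) these second coordinates dominate the first, Lemma \ref{neinvertibilni najveci dovoljno za ortogonalnost} applies once more.

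\textbf{Main obstacle.} The only genuine subtlety is bookkeeping of norms so that Lemma \ref{neinvertibilni najveci dovoljno za ortogonalnost} is applicable at each of the three edges: one must choose the representatives $b_1,d_1$ (they are only defined up to scalars) so that in each intermediate vertex the $M_k$-block has norm $\geqslant$ the $M_n$-block, and simultaneously $\|b\|>\|a\|$, $\|d\|>\|c\|$ on the endpoints. Because the intermediate vertices have one diagonal block of norm exactly $\|b_1\|$ (resp.\ $\|d_1\|$) and the other block also of norm $\|b_1\|$ (resp.\ $\|d_1\|$), equality ``$\geqslant$'' holds automatically there, and nothing needs to be optimized — the scalars can be fixed once, e.g.\ normalizing $\|b_1\|=\|d_1\|=1$. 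Hence the ``obstacle'' is really just making sure the $\geqslant$ version of Lemma \ref{neinvertibilni najveci dovoljno za ortogonalnost} (not a strict one) is the one being used, and that the existence of non-invertible $b_1,d_1$ (not merely norm-attaining ones) is what Proposition 2.4 of \cite{Keckic2023JMAA} provides. With that in hand the path above has length $3$, completing the upper bound and hence the lemma.
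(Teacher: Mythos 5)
Your proposal is correct and follows essentially the same route as the paper: the lower bound from Theorem \ref{dijametar veci od 2}, the reduction via the Section 2 results to the case where the non-invertible coordinates sit in the same summand and strictly dominate in norm, and then the identical three-edge path through orthogonal companions paired with one-entry diagonal matrices having disjoint supports, each edge justified by Lemma \ref{neinvertibilni najveci dovoljno za ortogonalnost} (the middle one via the zero product of the two diagonal blocks). The only, purely notational, slip is the order of the coordinates in your intermediate vertices: they should be written with the $n\times n$ diagonal block in the $M_n(\mathbb{C})$ slot and $b_1$, $d_1$ in the $M_k(\mathbb{C})$ slot, e.g.\ $\bigl(\diag(\|b_1\|,0,\dots,0),\,b_1\bigr)\in M_n(\mathbb{C})\oplus M_k(\mathbb{C})$, exactly as in the paper's own construction.
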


\subsubsection{Case of $\mathbb{C}\oplus M_2(\mathbb{C})$}
We will prove that $\diam(\mathbb{C}\oplus M_2(\mathbb{C}))=4$. Because in every $C^{*}$-algebra diameter is at most 4, it is enough to find two vertices whose distance is exactly 4. 
\begin{example}
Distance between $\left(1,\begin{bmatrix}
    2&0\\
    0&0
\end{bmatrix}\right)$ and $\left(1,\begin{bmatrix}
    1&1\\
    1&1
\end{bmatrix}\right)$ is 4.  

Norm of both matrices is 2. By Theorem \ref{najvec1i neinvertibilan}, we know that these two are not mutual strong orthogonal. Moreover, if there exist $(a, b)\in\mathbb{C}\oplus M_2(\mathbb{C})$ that is mutual strong orthogonal to both, then again by Theorem \ref{najvec1i neinvertibilan}, $b\strongperp\begin{bmatrix}
    2&0\\
    0&0
\end{bmatrix}$ and $b\strongperp\begin{bmatrix}
    1&1\\
    1&1
\end{bmatrix}$ and $\|b\|\geqslant\|a\|$. This is not possible, because by Theorem \ref{B(H)}(3) $\begin{bmatrix}
    2&0\\
    0&0
\end{bmatrix}b=\textbf{0}$ and $\begin{bmatrix}
    1&1\\
    1&1
\end{bmatrix}b=\textbf{0}$. If $b=\begin{bmatrix}
    b_{11}&b_{12}\\
    b_{21}&b_{22}
\end{bmatrix}$, then $2b_{11}=0,\ 2b_{12}=0,\ b_{11}+b_{21}=0$ and $b_{12}+b_{22}=0$, so $b=\textbf{0}$, by then $a=0$ which is not possible.

In the end, if we suppose that there exist two non-zero $(a,b),(c,d)\in\mathbb{C}\oplus M_2(\mathbb{C})$ such that
$$\left(1,\begin{bmatrix}
    2&0\\
    0&0
\end{bmatrix}\right)\strongperp(a,b)\strongperp(c,d)\strongperp \left(1,\begin{bmatrix}
    1&1\\
    1&1
\end{bmatrix}\right),$$
we know (by Theorem \ref{najvec1i neinvertibilan}) that $b\strongperp\begin{bmatrix}
    2&0\\
    0&0
\end{bmatrix}$ and $d\strongperp \begin{bmatrix}
    1&1\\
    1&1
\end{bmatrix}$, but also $\|b\|\geqslant\|a\|$ and $\|d\|\geqslant\|c\|$. There are three cases:
\begin{itemize}
    \item[1.] $a=c=0$: Then $b\strongperp d$, but this is not possible, because the images of matrices are not the same or orthogonal in the usual sense (and both are different from $\textbf{0}$).
    \item[2.] $a$ and $c$ are both invertible: Then again it must be $b\strongperp d$, which is again not possible. Indeed, it is true that for all $(x,y)\in\mathbb{C}\oplus M_2(\mathbb{C})$
    $$\max\{(|a+cx|,\|b+dy\|)\}\geqslant\max\{|a|,\|b\|\}=\|b\|,$$
    and if we put $x=-c^{-1}a$, we get $b\strongp d$. In the same way, we prove the other direction.
    \item[3.] $a$ is invertible and $c=0$ (the other direction is done analogously). Then for all $(x,y)\in\mathbb{C}\oplus M_2(\mathbb{C})$
    $$\max\{|0+ax|,\|d+by\|\}\geqslant\max\{|0|,\|d\|\}=\|d\|,$$
    so if $x=0$, it must be $d\strongp b$. But this is not possible either (strong $BJ$-orthogonality in one direction). Indeed, 
we know that $b_{11}=b_{12}=0$ (for the same reason as in the first part of the case when we considered that the distance is 2) and $\Img\left(\begin{bmatrix}
    1&1\\
   1&1
\end{bmatrix}\right)=\left\{\lambda\begin{bmatrix}
    1\\
    1
\end{bmatrix}\ |\ \lambda\in\mathbb{C} \right\}$, so $\Img\, d=\left\{\lambda\begin{bmatrix}
   1\\
    -1
\end{bmatrix}\ |\ \lambda\in\mathbb{C} \right\}$ (because $d\strongperp \begin{bmatrix}
    1&1\\
    1&1
\end{bmatrix}$). Because $d\strongp b$, by Theorem \ref{karakterizacija} there exists a unit vector $y\in\mathbb{C}^2$ such that $\|dy\|=\|d\|$ and $b^{*}dy=\textbf{0}$. So, knowing the image of the matrix $d$, $dy=\lambda\begin{bmatrix}
   1\\
    -1
\end{bmatrix}$ for some $\lambda\neq0$, and then $b^{*}\begin{bmatrix}
   1\\
    -1
\end{bmatrix}=\begin{bmatrix}
    -\overline{b_{21}}\\
    -\overline{b_{22}}
\end{bmatrix}$, so $b=\textbf{0}$ which is impossible.  
\end{itemize}
\end{example}
So, it is true:
\begin{lemma}\label{Lema 3}
    $\diam(\mathbb{C}\oplus M_2(\mathbb{C}))=4$.
\end{lemma}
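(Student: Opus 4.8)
The plan is to prove the two inequalities separately. The bound $\diam(\mathbb{C}\oplus M_2(\mathbb{C}))\leqslant 4$ is immediate: by \cite{Keckic2023JMAA} the distance between any two non-isolated vertices of any $C^{*}$-algebra is at most $4$, and the non-isolated part of $\Gamma(\mathbb{C}\oplus M_2(\mathbb{C}))$ is connected --- every non-isolated $(a,b)$ has $a=0$ or $b$ non-invertible, and in either case Lemma \ref{ortogonalni po koordinatama} joins it to $(\textbf{1},\textbf{0})$ or $(\textbf{0},\textbf{1})$, which are themselves adjacent. So the real content is the reverse inequality: exhibiting a pair of vertices at distance exactly $4$. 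By Remark \ref{nezgodno C} this must be a configuration in which two norm-attaining non-invertibles sit in the same coordinate, so that Theorem \ref{najvec1i neinvertibilan} governs every edge out of either vertex.

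For the lower bound I would take $v=\left(1,\begin{bmatrix}2&0\\0&0\end{bmatrix}\right)$ and $w=\left(1,\begin{bmatrix}1&1\\1&1\end{bmatrix}\right)$. Both have norm $2$, attained in the $M_2$-coordinate, while the $\mathbb{C}$-coordinate has strictly smaller norm, so Theorem \ref{najvec1i neinvertibilan} applies to every edge issued from $v$ or from $w$: if $v\strongperp(a,b)$ then $\begin{bmatrix}2&0\\0&0\end{bmatrix}\strongperp b$ in $M_2(\mathbb{C})$ and $\|b\|\geqslant|a|$, so $b\ne\textbf{0}$ is non-invertible, and since $\begin{bmatrix}2&0\\0&0\end{bmatrix}$ has image $\Lin\{e_1\}$, Theorem \ref{B(H)}(3) forces $\Img b=\Lin\{e_2\}$. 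Symmetrically, every neighbour $(c,d)$ of $w$ has $d\ne\textbf{0}$ with $\Img d=\Lin\{(1,-1)^{T}\}$. What makes everything work is that $\Lin\{e_1\}$, $\Lin\{e_2\}$, $\Lin\{(1,1)^{T}\}$, $\Lin\{(1,-1)^{T}\}$ are four pairwise distinct lines in $\mathbb{C}^{2}$, with $e_1\not\perp(1,1)^{T}$ and $e_2\not\perp(1,-1)^{T}$; hence the relevant pairs of matrices lie in different components of $\Gamma(M_2(\mathbb{C}))$.

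The three required non-existence statements are then checked in turn. (i) $v\not\strongperp w$: an edge would give $\begin{bmatrix}2&0\\0&0\end{bmatrix}\strongperp\begin{bmatrix}1&1\\1&1\end{bmatrix}$ in $M_2(\mathbb{C})$, impossible since $\Lin\{e_1\}\ne\Lin\{(1,1)^{T}\}$ and $e_1\not\perp(1,1)^{T}$. (ii) No common neighbour: such $(a,b)$ would satisfy $\Img b=\Lin\{e_2\}$ and $\Img b=\Lin\{(1,-1)^{T}\}$ simultaneously, hence $b=\textbf{0}$ and then $a=0$, so it is not a vertex. (iii) No path $v\strongperp(a,b)\strongperp(c,d)\strongperp w$: here $b,d\ne\textbf{0}$ with $\Img b=\Lin\{e_2\}$ and $\Img d=\Lin\{(1,-1)^{T}\}$, and the middle edge must be ruled out. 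If $a=c=0$, or if $a$ and $c$ are both invertible, the middle edge collapses to $b\strongperp d$ (annihilate the $\mathbb{C}$-coordinate with $x=-c^{-1}a$, resp. $x=0$), which is impossible because $\Lin\{e_2\}\ne\Lin\{(1,-1)^{T}\}$ and $e_2\not\perp(1,-1)^{T}$.

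The delicate subcase of (iii), which I expect to be the main obstacle, is $a$ invertible with $c=0$ (and its mirror image): here the middle edge yields only the one-sided relation $d\strongp b$, not $b\strongperp d$, so the component argument does not apply directly. To finish I would invoke Theorem \ref{karakterizacija}: choose a unit vector $y$ with $\|dy\|=\|d\|$ and $b^{*}dy=\textbf{0}$; since $\Img d=\Lin\{(1,-1)^{T}\}$ and $dy\ne\textbf{0}$, we get $dy=\lambda(1,-1)^{T}$ with $\lambda\ne 0$, whence $b^{*}(1,-1)^{T}=\textbf{0}$; but $\Img b=\Lin\{e_2\}$ means every column of $b$ is a multiple of $e_2$, and a short computation then gives $b=\textbf{0}$, a contradiction. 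This shows the distance between $v$ and $w$ is at least $4$; combined with the universal upper bound it equals $4$, so $\diam(\mathbb{C}\oplus M_2(\mathbb{C}))=4$.
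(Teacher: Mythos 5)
Your proof is correct and is essentially the paper's own argument: the same pair $\left(1,\begin{bmatrix}2&0\\0&0\end{bmatrix}\right)$, $\left(1,\begin{bmatrix}1&1\\1&1\end{bmatrix}\right)$, the same reduction of every incident edge to the $M_2$-coordinate via Theorem \ref{najvec1i neinvertibilan}, the same three-way case split on the middle edge of a length-3 path, and the same use of Theorem \ref{karakterizacija} to kill the one-sided subcase. The only slip is in your aside on connectedness: a vertex $(a,b)$ with $a\neq0$ and $b$ of rank one is \emph{not} adjacent to $(\textbf{1},\textbf{0})$ or $(\textbf{0},\textbf{1})$ (it is at distance $2$ from them), but this does not matter since the upper bound $\diam\leqslant4$ is taken from the cited reference anyway, exactly as in the paper.
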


\subsubsection{Case of $\mathbb{C}\oplus M_3(\mathbb{C})$} We will show that diameter is 3 in this case. As in the previous cases, we only have to consider the situation when the non-invertibles are at the same position and their norms are strictly greater than the norm of invertible ones. This is possible only if they are non-invertible in $M_3(\mathbb{C})$.

\begin{lemma}
    Let $(a,b),(c,d)\in \mathbb{C}\oplus M_3(\mathbb{C})$ such that $a,b$ are invertible (so different from $0$) and $c,d$ are non-invertible matrices. Further, suppose that $\|a\|<\|b\|$ and $\|c\|<\|d\|$. Then the distance between them is at most 3.
\end{lemma}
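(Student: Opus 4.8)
In the notation of the statement (forced by the ambient algebra $\mathbb{C}\oplus M_3(\mathbb{C})$), the pair $(a,b)$ has $a\in\mathbb{C}\setminus\{0\}$ and $b\in M_3(\mathbb{C})$ non-invertible with $|a|<\|b\|$, and likewise $c\in\mathbb{C}\setminus\{0\}$, $d\in M_3(\mathbb{C})$ non-invertible with $|c|<\|d\|$. The plan is to produce a path of length three joining $(a,b)$ and $(c,d)$. The obstruction to keep in mind is Theorem~\ref{B(H)}(4): inside the single summand $M_3(\mathbb{C})$ the distance from $b$ to $d$ may be $4$, so a path through intermediate vertices of the form $(0,\cdot)$ need not have length $\le 3$ --- the scalar coordinate has to do some work. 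First I would record two elementary facts, each obtained by fixing one coordinate and minimising over the other. \emph{Fact A.} If $a\ne 0$ and $|a|<\|b\|$, then for every $s\in\mathbb{C}$ and $m\in M_3(\mathbb{C})$ one has $(a,b)\strongperp(s,m)$ if and only if $m\strongperp b$ and $|s|\le\inf_y\|m+by\|$; in particular $(a,b)\strongperp(0,m)\iff m\strongperp b$, and whenever $m\strongperp b$ one also has $(a,b)\strongperp(\|m\|,m)$, because then $\inf_y\|m+by\|\ge\|m\|$. \emph{Fact B.} If $e\ne 0$ and $|s|\ge\|m\|$, then $(0,e)\strongperp(s,m)$ if and only if $e\strongp m$ --- one-sided orthogonality in $M_3(\mathbb{C})$ is enough here.

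Next, choose the two intermediate matrices. Let $N_b=\{x\in\mathbb{C}^3:\|bx\|=\|b\|\,\|x\|\}$ be the top singular subspace of $b$ (a linear subspace, $\ne\{0\}$), and set $R_b=b(N_b)$; since $\Ker b^{*}=(\Img b)^{\perp}\subseteq R_b^{\perp}$ and $R_b\ne\{0\}$ gives $\dim R_b^{\perp}\le 2$, fix a $2$-dimensional subspace $W$ with $R_b^{\perp}\subseteq W\subseteq\mathbb{C}^3$, and let $e$ be the orthogonal projection onto $W$ (a rank-$2$, hence non-invertible, matrix with $\|e\|=1$, $N_e=W$, $e(N_e)=W$, $\Ker e^{*}=W^{\perp}$). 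Then $b\strongperp e$: for $b\strongp e$ pick a unit $z\in N_b$ with $bz\in W^{\perp}$ --- possible since $W^{\perp}\subseteq R_b=b(N_b)$ and $W^{\perp}\ne\{0\}$ --- so that $e^{*}bz=0$; for $e\strongp b$ pick a unit $x\in\Ker b^{*}\subseteq R_b^{\perp}\subseteq W$, so that $ex=x$ and $b^{*}ex=b^{*}x=0$; in both cases Theorem~\ref{karakterizacija} applies. For the other side, fix a unit vector $u'\in\Ker d^{*}$ and take $f=u'u'^{*}$, the rank-$1$ orthogonal projection onto $\mathbb{C}u'$ (so $\|f\|=1$); by the same reasoning $f\strongperp d$ (for $f\strongp d$ use $x=u'$ with $fu'=u'\in\Ker d^{*}$; for $d\strongp f$ use that $dz\in\Img d\perp u'$ for every $z\in N_d$, hence $f^{*}dz=0$). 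Finally $e\strongp f$: one needs a unit $x\in N_e=W$ with $f^{*}ex=0$, i.e.\ $x\in W\cap(u')^{\perp}$; since $W$ and $(u')^{\perp}$ are two $2$-dimensional subspaces of $\mathbb{C}^3$ they meet nontrivially, so such $x$ exists and Theorem~\ref{karakterizacija} gives $e\strongp f$.

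Then chain these together. By Fact A (applied to $e\strongperp b$), $(a,b)\strongperp(0,e)$; by Fact B, with $s=\|f\|=1=\|m\|$, the relation $e\strongp f$ gives $(0,e)\strongperp(\|f\|,f)$; and by Fact A applied with $(c,d)$ in the role of $(a,b)$ --- legitimate since $c\ne 0$ and $|c|<\|d\|$ --- together with $\|f\|\le\inf_y\|f+dy\|$ (which holds because $f\strongp d$), $(\|f\|,f)\strongperp(c,d)$. Hence
$$(a,b)\strongperp(0,e)\strongperp(\|f\|,f)\strongperp(c,d),$$
and since all four points are nonzero (with $0$ non-invertible in $\mathbb{C}$ and $e,f$ non-invertible in $M_3(\mathbb{C})$, so the two middle ones are even non-isolated), the distance between $(a,b)$ and $(c,d)$ is at most $3$.

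The heart of the argument --- and the part I expect to require the most care --- is the choice that makes $e\strongp f$ automatic: $e$ is taken to be a rank-$2$ projection, so $e(N_e)=W$ is $2$-dimensional, while $f$ is a rank-$1$ projection, so $\Ker f^{*}=(u')^{\perp}$ is $2$-dimensional, and two $2$-planes in $\mathbb{C}^3$ always intersect; the companion device is using the ``large-scalar'' vertex $(\|f\|,f)$ instead of $(0,f)$, which via Fact B replaces the \emph{mutual} orthogonality $e\strongperp f$ (which may fail) by the \emph{one-sided} $e\strongp f$. This is exactly what circumvents the diameter-$4$ phenomenon inside $M_3(\mathbb{C})$; when instead $\mathrm{dist}_{\Gamma(M_3(\mathbb{C}))}(b,d)\le 3$ one could alternatively finish directly from Lemma~\ref{ortogonalni po koordinatama} applied coordinatewise with all scalar coordinates zero, but the construction above makes that case distinction unnecessary.
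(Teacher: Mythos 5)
Your proof is correct and follows essentially the same route as the paper: the identical three-step path $(a,b)\strongperp(0,e)\strongperp(\|f\|,f)\strongperp(c,d)$, with $f$ a rank-one projection onto a vector in $\Ker d^{*}$, $e$ a rank-two map chosen so that $b\strongperp e$, and the crucial observation that the scalar $\|f\|$ in the first coordinate reduces the middle edge to the one-sided relation $e\strongp f$, obtained by intersecting two $2$-planes in $\mathbb{C}^{3}$. The only differences are cosmetic (your $e$ is an orthogonal projection onto a $2$-plane containing $\Ker b^{*}$ rather than the paper's map defined on $v_b$ and $w$, and your Facts A--B systematize the coordinatewise norm computations the paper does ad hoc).
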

\begin{proof}
    The idea is to construct $e,f\in M_3(\mathbb{C})$ such that $b\strongperp e$ and $d\strongperp f$, but with the additional property that $e\strongp f$ (note that mutual strong $BJ$-orthogonality is not possible in the general case). Then it is true that
    $$(a,b)\strongperp(0,e)\strongperp(\|f\|,f)\strongperp(c,d).$$
    Namely, first and third mutual strong $BJ$-orthogonality is valid by Lemma \ref{neinvertibilni najveci dovoljno za ortogonalnost}, as well as the second in the left-to-right direction. Also, for all $(x,y)\in\mathbb{C}\oplus M_3(\mathbb{C})$ it is true that 
    $$\max\{|\|f\|+0x|,\|f+ey\|\}\geqslant\|f\|=\max\{|\|f\||,\|f\|\},$$
    which proves the missing direction. So, what remains is to construct $e$ and $f$.

    There exist unit vectors $x,y\in\mathbb{C}^3$ such that $\|bx\|=\|b\|$ and $\|dy\|=\|d\|$. Also, because $b$ and $d$ are non-invertible, there exists unit vectors $v_b\in \Ker b^{*}$ and $v_d\in \Ker d^{*}$. There exist unit vector $w\in\Lin\{bx,v_d\}^{\perp}$ (dimension is 3). We define $e$ with $ev_b=v_b,ew=w$ and $e|_{\Lin\{v_b,w\}^{\perp}}=\textbf{0}$ and $f$ with $fv_d=v_d,f|_{\{v_d\}^{\perp}}=\textbf{0}$. 

    Let us notice that $\langle bx,ev_b\rangle=\langle bx,v_b\rangle=\langle x,b^{*}v_b\rangle=\langle x,0\rangle=0$ and $\langle bx,ew\rangle=\langle bx,w\rangle=0$ because $bx\perp w$. So, $bx\perp \Img\, e$ from where we conclude $e^{*}bx=\textbf{0}$. As $\|bx\|=\|b\|$, by Theorem \ref{karakterizacija} we get $b\strongp e$. Further, we know that it holds $\|ev_b\|=\|v_b\|=1=\|e\|$ and $b^{*}ev_b=b^{*}v_b=\textbf{0}$, because $v_b\in \Ker b^{*}$. Again, by Theorem \ref{karakterizacija} we get $b\strongperp e$. In a similar way, we get $dy\perp \Img\,f$, so $f^{*}dy=\textbf{0}$. As $\|dy\|=\|d\|$, it holds $d\strongp f$. Also, $\|fv_d\|=\|v_d\|=1=\|f\|$ and $d^{*}fv_d=d^{*}v_d=\textbf{0}$, so $d\strongperp f$. In the end, $\|ew\|=\|w\|=1=\|e\|$ and $f^{*}ew=f^{*}w=\textbf{0}$ because $w\perp v_d$, so we proved $e\strongp f$, which completes the proof.
\end{proof}

Thus we proved 
\begin{lemma}\label{Lema 4}
    $\diam(\mathbb{C}\oplus M_3(\mathbb{C}))=3$.
\end{lemma}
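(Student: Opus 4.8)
The plan is to establish the two bounds $\diam(\mathbb{C}\oplus M_3(\mathbb{C}))\ge 3$ and $\diam(\mathbb{C}\oplus M_3(\mathbb{C}))\le 3$ separately. The lower bound is immediate: $\mathbb{C}\oplus M_3(\mathbb{C})\not\cong\mathbb{C}^n$, so Theorem \ref{dijametar veci od 2} already gives $\diam\ge 3$ (one may also produce a concrete pair at distance $3$ by choosing $b,d\in M_3(\mathbb{C})$ non-invertible and at distance $\ge 3$ in $\Gamma(M_3(\mathbb{C}))$, with scalar parts of smaller norm, and ruling out distance $\le 2$ via Theorem \ref{najvec1i neinvertibilan}). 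So the real content is the upper bound.

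For the upper bound I would take two arbitrary non-isolated vertices $(a,b),(c,d)\in\mathbb{C}\oplus M_3(\mathbb{C})$. By Lemma 5.3 of \cite{Keckic2023JMAA}, being non-isolated means some coordinate is not right invertible; in $M_3(\mathbb{C})$ this coincides with being non-invertible, and in $\mathbb{C}$ it means the coordinate equals $0$. I would then run the dichotomy already prepared in the previous section. If the nonzero not-right-invertible coordinates of the two vertices lie in different positions, Theorem \ref{neinvertibilni na razlicitim koordinatama} gives distance $\le 3$. If they lie in the same position but the norm of at least one of the two tuples is not attained strictly on that coordinate, Theorems \ref{neinvertibilan nenajveci} and \ref{kombinacija neinvertibilnih} give distance $\le 3$ (even $\le 2$ when both tuples have a coordinate of norm at least that of the non-invertible one), and by Remark \ref{nezgodno C} the only remaining configuration is the one where both vertices have their norm attained strictly at a not-right-invertible coordinate and, after the reductions above, at the \emph{same} coordinate. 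The feature special to $\mathbb{C}\oplus M_3(\mathbb{C})$ is that in the scalar slot the only non-invertible element is $0$, of norm $0$, which is never strictly dominant; so this coordinate must be the $M_3(\mathbb{C})$-slot, and we are reduced to $(a,b),(c,d)$ with $b,d$ non-invertible, $\|b\|>\|a\|$, $\|d\|>\|c\|$ (the scalars $a,c$ possibly $0$, which is harmless since the construction below uses only $\|a\|\le\|b\|$ and $\|c\|\le\|d\|$).

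This reduced case is precisely the preceding lemma, which I would invoke directly: pick unit vectors $x,y\in\mathbb{C}^3$ with $\|bx\|=\|b\|$, $\|dy\|=\|d\|$, unit vectors $v_b\in\Ker b^{*}$, $v_d\in\Ker d^{*}$ (these exist since $b,d$ are non-invertible), and a unit vector $w\in\Lin\{bx,v_d\}^{\perp}$; then define $e$ by $ev_b=v_b$, $ew=w$, $e|_{\Lin\{v_b,w\}^{\perp}}=\textbf{0}$ and $f$ by $fv_d=v_d$, $f|_{\{v_d\}^{\perp}}=\textbf{0}$. Theorem \ref{karakterizacija} then yields $b\strongperp e$, $d\strongperp f$ and $e\strongp f$, and Lemma \ref{neinvertibilni najveci dovoljno za ortogonalnost} promotes the chain $(a,b)\strongperp(0,e)\strongperp(\|f\|,f)\strongperp(c,d)$ to a genuine path of length $3$. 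Combined with the lower bound this gives $\diam(\mathbb{C}\oplus M_3(\mathbb{C}))=3$.

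The step I expect to be the true obstacle, although it is already dealt with inside the lemma quoted above, is producing $e$ and $f$ that are simultaneously mutually strong orthogonal to $b$, respectively $d$, and one-sidedly strong orthogonal to each other; this is exactly where $\dim\mathbb{C}^3=3$ enters, guaranteeing that $\Lin\{bx,v_d\}^{\perp}$ still contains a unit vector $w$. The only other point requiring care is checking that the case split of the previous section stays exhaustive once a scalar coordinate is allowed to vanish and is thus ``non-invertible for free''; the observation that $0$ has norm $0$ is what keeps this from opening a genuinely new case.
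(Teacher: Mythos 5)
Your proposal is correct and follows essentially the same route as the paper: the lower bound via Theorem \ref{dijametar veci od 2}, the reduction (using the theorems of Section 2 and the fact that $0$ is the only non-invertible scalar, of norm $0$) to the case where both non-invertibles sit in the $M_3(\mathbb{C})$ slot with strictly dominant norm, and then the same construction of $e$ and $f$ via $v_b\in\Ker b^{*}$, $v_d\in\Ker d^{*}$ and a unit vector $w\in\Lin\{bx,v_d\}^{\perp}$ giving the path $(a,b)\strongperp(0,e)\strongperp(\|f\|,f)\strongperp(c,d)$. You also correctly read the intended hypotheses of the auxiliary lemma (whose statement in the paper has the roles of $a,b$ and $c,d$ garbled), so there is nothing to add.
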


\subsubsection{Case of $\mathbb{C}\oplus M_n(\mathbb{C}), n\geqslant4$}
By Theorem \ref{dijametar veci od 2} is at least 3. By Theorem \ref{najvec1i neinvertibilan}, we need to examine only the case where non-invertibles are the biggest in respect to norm and at the same position. The only possible situation is that they are in $M_n(\mathbb{C})$ ($0$ is only non-invertible in $\mathbb{C}$ and has norm $0$). Finally, by Theorem \ref{B(H)}, the diameter of $M_n(\mathbb{C})$ is 3, so diameter of $\mathbb{C}\oplus M_n(\mathbb{C})$ is at most 3 (we can connect the matrices by the path of length at most 3, and write $0$ in the remaining places in part of $\mathbb{C}$).

So, we proved 
\begin{lemma}\label{Lema 5}
    $\diam(\mathbb{C}\oplus M_n(\mathbb{C}))=3$ for all $n\geqslant4$.
\end{lemma}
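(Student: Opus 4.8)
\textbf{Proof proposal for Lemma \ref{Lema 5}.}

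The plan is to show $\diam(\mathbb{C}\oplus M_n(\mathbb{C}))=3$ for $n\geqslant 4$ by combining the lower bound already available from Theorem \ref{dijametar veci od 2} with a case analysis that in every remaining case either invokes one of the reduction theorems from Section 2 or else reduces to the structure of $\Gamma(M_n(\mathbb{C}))$, whose diameter is $3$ by Theorem \ref{B(H)}(5). First I would record that $\diam(\mathbb{C}\oplus M_n(\mathbb{C}))\geqslant 3$ is immediate from Theorem \ref{dijametar veci od 2}, since $\mathbb{C}\oplus M_n(\mathbb{C})\not\cong\mathbb{C}^m$ for any $m$ when $n\geqslant 2$. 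It therefore remains to prove the upper bound: every pair of non-isolated vertices $(a_1,a_2)$, $(b_1,b_2)$ has distance at most $3$. Recall that non-isolated means (Lemma 5.3 of \cite{Keckic2023JMAA}) that at least one coordinate is not right invertible; in $\mathbb{C}$ the only non-invertible element is $0$, and in $M_n(\mathbb{C})$ non-right-invertible coincides with non-invertible.

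Next I would dispose of the "easy" configurations by citing the Section 2 theorems. If the non-invertible coordinates sit in different positions (one vertex non-invertible in the $\mathbb{C}$-slot, i.e.\ $0$ there, the other non-invertible in the $M_n$-slot), Theorem \ref{neinvertibilni na razlicitim koordinatama} gives distance at most $3$. If both vertices are non-invertible in the same slot, there are two sub-possibilities. The slot cannot be $\mathbb{C}$ in the "norm attained on the non-invertible" subcase, since $0$ is the only non-invertible in $\mathbb{C}$ and it has norm $0$, so it can never strictly dominate; hence if the common non-invertible slot is $\mathbb{C}$ we have $a_1=b_1=0$ and the norm condition forces us into the hypotheses of Theorem \ref{neinvertibilan nenajveci} (distance $\leqslant 2$) or Theorem \ref{kombinacija neinvertibilnih} (distance $\leqslant 3$) after possibly swapping the roles of the two vertices. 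The only configuration not covered by these theorems, per Remark \ref{nezgodno C}, is when both vertices have their unique non-invertible coordinate in the $M_n$-slot and the norm is attained there, i.e.\ $\|a_2\|>\|a_1\|$, $\|b_2\|>\|b_1\|$, with $a_2,b_2\in M_n(\mathbb{C})$ non-invertible and $a_1,b_1\in\mathbb{C}$ invertible (nonzero).

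For that remaining configuration I would argue directly. By Theorem \ref{B(H)}(5), $\diam\Gamma(M_n(\mathbb{C}))=3$, so there is a path $a_2=c^{(0)}\strongperp c^{(1)}\strongperp c^{(2)}\strongperp c^{(3)}=b_2$ of length at most $3$ in $M_n(\mathbb{C})$ joining $a_2$ to $b_2$. The idea is to lift this to a path in $\mathbb{C}\oplus M_n(\mathbb{C})$ of the same length by writing $0$ in the $\mathbb{C}$-coordinate of the intermediate vertices and checking that the endpoints still connect. Concretely I would use Lemma \ref{neinvertibilni najveci dovoljno za ortogonalnost}: since $\|a_2\|>\|a_1\|\geqslant 0=\|0\|$ and $a_2\strongperp c^{(1)}$ in $M_n(\mathbb{C})$, we get $(a_1,a_2)\strongperp(0,c^{(1)})$; similarly $(b_1,b_2)\strongperp(0,c^{(2)})$; and the middle orthogonalities $(0,c^{(i)})\strongperp(0,c^{(i+1)})$ hold coordinatewise by Lemma \ref{ortogonalni po koordinatama}. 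This yields a path of length $3$, which is the upper bound we want. The main obstacle is making sure the path we borrow from $M_n(\mathbb{C})$ has length \emph{at most} $3$ starting from $a_2$ — if $a_2$ happens to be isolated in $\Gamma(M_n(\mathbb{C}))$ this fails, but $a_2$ is non-invertible by hypothesis, hence non-isolated by Theorem \ref{B(H)}(1), so this cannot occur; similarly for $b_2$. A minor point to verify is that Lemma \ref{neinvertibilni najveci dovoljno za ortogonalnost} applies cleanly when $c^{(1)}$ could have large norm, but since the $\mathbb{C}$-coordinate on that side is $0$ the norm-domination hypothesis "$\|c^{(1)}\|\geqslant\|0\|$" is automatic, so there is no real difficulty here.
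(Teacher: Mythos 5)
Your proposal is correct and follows essentially the same route as the paper: the lower bound comes from Theorem \ref{dijametar veci od 2}, the reduction to the case of dominant non-invertible coordinates in the $M_n(\mathbb{C})$-slot uses the Section 2 results, and the upper bound is obtained by taking a path of length at most $3$ in $\Gamma(M_n(\mathbb{C}))$ (Theorem \ref{B(H)}(5)) and padding the intermediate vertices with $0$ in the $\mathbb{C}$-coordinate. You merely make explicit, via Lemma \ref{neinvertibilni najveci dovoljno za ortogonalnost} and Lemma \ref{ortogonalni po koordinatama}, the verification of the lifted edges that the paper leaves as a brief remark.
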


\subsection{Case with three or more summands}
We will prove that the diameter of these finite dimensional $C^{*}$-algebras is equal to 3 in case $A\not\cong \mathbb{C}^{n}$ for any $n\in\mathbb{N}$, which is the only case left to examine. So, we can assume that some $n_i$ is greater than 1 and that non-invertible elements $a_i,b_i\in M_{n_i}$ have strictly greater norm with respect to other elements. 

\begin{lemma}
    Let $k\geqslant3,n_i\geqslant2$ and $$(a_1,\dots,a_i,\dots,a_k),(b_1,\dots,b_i,\dots,b_k)\in A=M_{n_1}(\mathbb{C})\oplus\dots M_{n_i}(\mathbb{C})\oplus\dots M_{n_k}(\mathbb{C}),$$
    such that $a_i,b_i$ are the only non-invertible elements and $\|a_i\|>\|a_j\|,\|b_i\|>\|b_j\|$ for all $j\neq i$. Than the distance between them is at most 3.
\end{lemma}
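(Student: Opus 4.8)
The plan is to join the two vertices by an explicit path of length $3$, using the fact that when $k\geqslant3$ there are at least two summands other than the $i$-th, which can serve as ``anchors''. First fix two distinct indices $p,q\in\{1,\dots,k\}\setminus\{i\}$. Since $a_i$ and $b_i$ are non-invertible in $M_{n_i}(\mathbb{C})$, there are nonzero $e,f\in M_{n_i}(\mathbb{C})$ with $a_i\strongperp e$ and $b_i\strongperp f$ (see \cite{Keckic2023JMAA}, Proposition 2.4). Writing $\textbf{1}$ for the identity of the relevant block, define $u,w\in A$ by
$$u_i=e,\quad u_p=\|e\|\,\textbf{1},\quad u_j=\textbf{0}\ (j\notin\{i,p\});\qquad w_i=f,\quad w_q=\|f\|\,\textbf{1},\quad w_j=\textbf{0}\ (j\notin\{i,q\}),$$
so that $\|u\|=\|e\|>0$ and $\|w\|=\|f\|>0$. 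The claim is that
$$(a_1,\dots,a_k)\strongperp u\strongperp w\strongperp(b_1,\dots,b_k),$$
which is what we want, since $u$ and $w$ are nonzero.

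The three verifications are short. For $(a_1,\dots,a_k)\strongperp u$, note first that $\|(a_1,\dots,a_k)\|=\|a_i\|$ because $\|a_i\|>\|a_j\|$ for $j\neq i$; then, for every $(c_1,\dots,c_k)\in A$, inspecting coordinate $i$ alone gives
$$\max_{1\leqslant j\leqslant k}\|a_j+u_jc_j\|\geqslant\|a_i+ec_i\|\geqslant\|a_i\|,\qquad\max_{1\leqslant j\leqslant k}\|u_j+a_jc_j\|\geqslant\|e+a_ic_i\|\geqslant\|e\|=\|u\|,$$
which uses only $a_i\strongperp e$; the relation $w\strongperp(b_1,\dots,b_k)$ is proved the same way. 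For $u\strongperp w$, the point is that $w_p=\textbf{0}$ and $u_q=\textbf{0}$, so for every $(c_1,\dots,c_k)\in A$ coordinate $p$, resp.\ $q$, already forces
$$\max_{1\leqslant j\leqslant k}\|u_j+w_jc_j\|\geqslant\|u_p\|=\|e\|=\|u\|,\qquad\max_{1\leqslant j\leqslant k}\|w_j+u_jc_j\|\geqslant\|w_q\|=\|f\|=\|w\|.$$
Hence the path is valid and the distance is at most $3$.

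The only genuine idea, and the sole place where $k\geqslant3$ enters, is the last step: the anchor entries $\|e\|\,\textbf{1}$ and $\|f\|\,\textbf{1}$ make the strong $BJ$-inequalities between $u$ and $w$ hold at no cost, and this is impossible with only one auxiliary coordinate. Everything else is a routine computation with the $\max$-norm on a direct sum, Theorem~\ref{karakterizacija} being needed only to produce $e$ and $f$ — no invertibility of the other coordinates, and no further property of $e,f$, is used. That $k\geqslant3$ is really necessary can already be seen from $\diam(\mathbb{C}\oplus M_2(\mathbb{C}))=4$ in Lemma~\ref{Lema 3}.
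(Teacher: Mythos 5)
Your proof is correct and follows essentially the same route as the paper: the same length-$3$ path through middle vertices that carry a strong-orthogonal partner of $a_i$ (resp.\ $b_i$) in coordinate $i$ and an anchor $\|e\|\,\textbf{1}$ (resp.\ $\|f\|\,\textbf{1}$) in one of two distinct auxiliary coordinates, with the middle link forced by the zero coordinates. The only cosmetic differences are that the paper places the anchors in coordinates $1$ and $k$ rather than generic $p,q$, and invokes Lemma~\ref{neinvertibilni najveci dovoljno za ortogonalnost} for the outer links where you verify the same inequalities directly.
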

\begin{proof}
    Because $a_i,b_i$ are non-invertible, there are non-zero non-invertible $a_i'$ and $b_i'$ such that $a_i\strongperp a_i'$ and $b_i\strongperp b_i'$. We will prove that
    \begin{equation}
        \begin{split}\nonumber
            (a_1,\dots,a_i,\dots,a_k)&\strongperp(\|a_i'\|\textbf{1},\dots,a_i',\dots,\textbf{0})\\
            &\strongperp(\textbf{0},\dots,b_i'\,\dots,\|b_i'\|\textbf{1})\strongperp(b_1,\dots,b_i,\dots,b_k),
        \end{split}
    \end{equation}
    where all other elements are $\textbf{0}$.

    Namely, first and third mutual strong $BJ$-orthogonality is valid by Lemma \ref{neinvertibilni najveci dovoljno za ortogonalnost}. It remains to prove the second orthogonality. For all $(x_1,\dots,x_i,\dots,x_k)\in A$, it is true that 
    \begin{equation}
        \begin{split}\nonumber
            &\max\{\|\|a_i'\|\textbf{1}+\textbf{0}x_1\|,\dots,\|a_i'+b_i'x_i\|,\dots,\|\textbf{0}+\|b_i'\|\textbf{1}x_k\|\}\\
            &\geqslant\|\|a_i'\|\textbf{1}\|=\max\{\|\|a_i'\|\textbf{1}\|,\dots,\|a_i'\|,\dots,\|\textbf{0}\|\}, 
        \end{split}
    \end{equation}
    which proves that $(\|a_i'\|\textbf{1},\dots,a_i',\dots,\textbf{0})\strongp(\textbf{0},\dots,b_i'\,\dots,\|b_i'\|\textbf{1})$. The second direction is proved analogously.
\end{proof}

So we proved
\begin{lemma}\label{Lema 6}
    Let $A=M_{n_1}(\mathbb{C})\oplus\dots M_{n_i}(\mathbb{C})\oplus\dots M_{n_k}(\mathbb{C}), k\geqslant3$ and $A\not\cong\mathbb{C}^n$ for an arbitrary $n\in\mathbb{N}$. Then $\diam A=3$. 
\end{lemma}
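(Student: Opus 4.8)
The plan is to reduce Lemma \ref{Lema 6} to the single remaining case and then exhibit an explicit path of length three. By Theorem \ref{dijametar veci od 2} we already know $\diam A\geqslant 3$ whenever $A\not\cong\mathbb{C}^n$, so the whole content is the upper bound $\diam A\leqslant 3$. To prove this, fix two non-isolated vertices $(a_1,\dots,a_k)$ and $(b_1,\dots,b_k)$; by Lemma 5.3 of \cite{Keckic2023JMAA} each tuple has at least one non-right-invertible (equivalently, since we are in matrix summands, non-invertible) coordinate. First I would dispose of the easy cases by citing the theorems of Section 2: if the non-invertible coordinates occur at different positions, Theorem \ref{neinvertibilni na razlicitim koordinatama} gives distance $\leqslant 3$; if they occur at the same position $i$ but the norm of the non-invertible block is not strictly dominant in at least one tuple, then Theorems \ref{neinvertibilan nenajveci} and \ref{kombinacija neinvertibilnih} give distance $\leqslant 3$ (indeed $\leqslant 2$ in the fully non-dominant subcase). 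This leaves exactly the situation isolated in Remark \ref{nezgodno C}: both tuples have a unique non-invertible coordinate, at the same index $i$, and $\|a_i\|>\|a_j\|$, $\|b_i\|>\|b_j\|$ for all $j\neq i$. Since $k\geqslant 3$ and $A\not\cong\mathbb{C}^n$, some $n_i\geqslant 2$ — this is precisely the hypothesis of the Lemma preceding \ref{Lema 6}, so I would simply invoke that Lemma.

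The heart of the argument is therefore the preceding Lemma, whose proof I would carry out as follows. Because $a_i$ and $b_i$ are non-invertible in $M_{n_i}(\mathbb{C})$, Proposition 2.4 of \cite{Keckic2023JMAA} furnishes nonzero non-invertible $a_i'$ and $b_i'$ with $a_i\strongperp a_i'$ and $b_i\strongperp b_i'$ inside $M_{n_i}(\mathbb{C})$. The key idea — exploiting that $k\geqslant 3$, so there is a ``spare'' coordinate on each side — is to build the intermediate vertices
\begin{equation}\nonumber
u=(\|a_i'\|\textbf{1},\dots,a_i',\dots,\textbf{0}),\qquad v=(\textbf{0},\dots,b_i',\dots,\|b_i'\|\textbf{1}),
\end{equation}
where in $u$ the distinguished index $i$ carries $a_i'$, the first coordinate carries $\|a_i'\|\textbf{1}$, and all others are $\textbf{0}$; symmetrically for $v$ with the last coordinate carrying $\|b_i'\|\textbf{1}$. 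One then checks the chain
\begin{equation}\nonumber
(a_1,\dots,a_k)\strongperp u\strongperp v\strongperp(b_1,\dots,b_k).
\end{equation}
The outer two orthogonalities follow from Lemma \ref{neinvertibilni najveci dovoljno za ortogonalnost}: in $u$ the coordinate at index $i$ has the strictly largest norm among those with non-trivial entries (all others are $\textbf{0}$), matching the normalization, and $a_i\strongperp a_i'$; likewise for $v$ and $b_i\strongperp b_i'$. The middle orthogonality $u\strongperp v$ is the one place where having three summands pays off: $u$ vanishes wherever $v$ does not have a unit-times-identity block forcing the max norm — concretely, for any perturbation $(x_1,\dots,x_k)$ the first coordinate of $u+v\cdot x$ equals $\|a_i'\|\textbf{1}+\textbf{0}\,x_1=\|a_i'\|\textbf{1}$, whose norm already equals $\|u\|$, so $u\strongp v$; the reverse $v\strongp u$ is symmetric using the last coordinate. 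This is a clean, purely norm-level computation with no appeal to Theorem \ref{karakterizacija}.

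The main obstacle, conceptually, is the phenomenon flagged by the Example after Lemma \ref{ortogonalni po koordinatama}: mutual strong orthogonality does not decompose coordinatewise, so one cannot naively paste together short paths in each summand. The trick that resolves it here is the observation in Lemma \ref{neinvertibilni najveci dovoljno za ortogonalnost} that a \emph{single} dominant coordinate satisfying $\strongperp$ already certifies $\strongperp$ for the whole tuple — this lets us route the path through vertices whose ``active'' block is a copy of the matrix-level witness $a_i'$ (resp.\ $b_i'$), padding the norm-dominating behavior with $\|a_i'\|\textbf{1}$ in a spare slot. Everything else is bookkeeping: verifying that the intermediate vertices are nonzero (immediate, as $a_i',b_i'\neq\textbf{0}$) and that the promised chain has exactly the claimed length. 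I would then close by combining this Lemma with the case analysis above and Theorem \ref{dijametar veci od 2} to conclude $\diam A=3$, completing Lemma \ref{Lema 6}.
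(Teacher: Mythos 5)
Your proposal is correct and follows essentially the same route as the paper: the same reduction via Theorems \ref{neinvertibilni na razlicitim koordinatama}, \ref{neinvertibilan nenajveci}, \ref{kombinacija neinvertibilnih} to the single dominant-norm same-position case, and then the same chain through $(\|a_i'\|\textbf{1},\dots,a_i',\dots,\textbf{0})$ and $(\textbf{0},\dots,b_i',\dots,\|b_i'\|\textbf{1})$, with the outer links certified by Lemma \ref{neinvertibilni najveci dovoljno za ortogonalnost} and the middle one by the same direct norm computation on the spare coordinates. No substantive differences from the paper's argument.
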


We will conclude this chapter by summarizing the results we have obtained in it, expressing them in one theorem.

\begin{theorem}
 Let $A=M_{n_1}(\mathbb{C})\oplus M_{n_2}(\mathbb{C})\oplus\dots M_{n_k}(\mathbb{C}),k\geqslant2$. Then:
 \begin{itemize}
     \item[(1)] $\diam(\mathbb{C}\oplus\mathbb{C})=1$;
     \item[(2)] $\diam(\mathbb{C}\oplus M_2(\mathbb{C}))=4$;
     \item[(3)] $\diam(A)=3$ in all other cases. 
 \end{itemize}
\end{theorem}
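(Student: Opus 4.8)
The final theorem is purely a synthesis of the lemmas established in Sections~2 and~3, so the plan is to assemble it case by case rather than to do any new computation. Item~(1) is exactly Lemma~\ref{Lema 1} (first clause) / Lemma~\ref{Lema 2}'s base discussion; I would simply cite the $\mathbb{C}\oplus\mathbb{C}$ subsection, where the only two non-isolated vertices $(1,0)$ and $(0,1)$ are shown to be mutually strong $BJ$-orthogonal, giving diameter $1$. Item~(2) is precisely Lemma~\ref{Lema 3}. So the whole content of the theorem is item~(3): that in every remaining case the diameter equals $3$.

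For item~(3) I would first dispose of the lower bound $\diam(A)\geqslant 3$. If $A\cong\mathbb{C}^n$ then $n\geqslant 3$ (since $n=1$ gives no non-isolated vertices and $n=2$ is case~(1)), and Lemma~\ref{Lema 1} gives $\diam(\mathbb{C}^n)=3$. If $A\not\cong\mathbb{C}^n$, then some $n_i\geqslant 2$ and Theorem~\ref{dijametar veci od 2} gives $\diam(A)\geqslant 3$. This already settles the lower bound in all subcases of~(3) at once.

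For the upper bound $\diam(A)\leqslant 3$ in the remaining cases, I would organize by the number of summands. With $k\geqslant 3$: if $A\cong\mathbb{C}^n$ (so $n\geqslant 3$) use Lemma~\ref{Lema 1}; otherwise $A\not\cong\mathbb{C}^n$ and Lemma~\ref{Lema 6} gives $\diam(A)=3$ directly. With $k=2$: the pair $(n_1,n_2)$, up to order, is one of $M_n\oplus M_k$ with $n,k\geqslant 2$ (Lemma~\ref{Lema 2}), $\mathbb{C}\oplus M_2$ (excluded here, it is case~(2)), $\mathbb{C}\oplus M_3$ (Lemma~\ref{Lema 4}), $\mathbb{C}\oplus M_n$ with $n\geqslant 4$ (Lemma~\ref{Lema 5}), or $\mathbb{C}\oplus\mathbb{C}$ (excluded, case~(1)); in every admissible case one of the quoted lemmas gives diameter exactly $3$. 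Combining the lower and upper bounds yields $\diam(A)=3$ in all cases not covered by~(1) and~(2).

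The only point requiring any care is the bookkeeping of which $(n_1,\dots,n_k)$ fall under which lemma, i.e.\ checking that the five two-summand subsections together with Lemma~\ref{Lema 6} and Lemma~\ref{Lema 1} genuinely exhaust every isomorphism class of finite-dimensional $C^*$-algebra; since such an algebra is, by Theorem~1.3, a direct sum of matrix algebras and the diameter is invariant under reordering summands, this is immediate. I do not anticipate a substantive obstacle — all the analytic work was done in the preceding lemmas, and this theorem is a packaging statement.
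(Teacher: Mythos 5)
Your proposal matches the paper's proof, which is exactly this one-line synthesis: the theorem is proved by citing Lemmata \ref{Lema 1}, \ref{Lema 2}, \ref{Lema 3}, \ref{Lema 4}, \ref{Lema 5}, and \ref{Lema 6}, and your case bookkeeping correctly exhausts all isomorphism classes. No gaps.
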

\begin{proof}
The proof is contained in Lemmata \ref{Lema 1}, \ref{Lema 2}, \ref{Lema 3}, \ref{Lema 4}, \ref{Lema 5}, \ref{Lema 6}.        
\end{proof}

\section{Future study}\label{future}
A natural next step in the research will be to find the diameter of $AF$-algebras or $UHF$-algebras (or some classes of them, for example compact operators on Hilbert space), so to see how direct limits and tensor product affect mutual strong $BJ$-orthogonality. There are some recent results in the case of tensor product (see \cite{Mohit}).

\bibliographystyle{abbrv}
\bibliography{Orthograph}

\end{document}